\documentclass[11pt]{amsart}
\usepackage{amsthm, amsmath, amssymb}
\usepackage{mathtools}
\usepackage{hyperref}
\usepackage[top=1.25in, bottom=1.25in,left=1.25in,right=1.25in]{geometry}
\usepackage{cleveref}
\usepackage{etoolbox}

\newtheorem{thm}{Theorem}[section]
\newtheorem{lem}[thm]{Lemma}

\newtheorem{cor}[thm]{Corollary}

\theoremstyle{definition}
\newtheorem{defin}[thm]{Definition}

\newtheorem{claim}[thm]{Claim}

\theoremstyle{remark}
\newtheorem{remark}[thm]{Remark}

\crefname{thm}{Theorem}{Theorems}
\crefname{lem}{Lemma}{Lemmas}
\crefname{prop}{Proposition}{Propositions}
\crefname{cor}{Corollary}{Corollaries}
\crefname{conj}{Conjecture}{Conjectures}
\crefname{defin}{Definition}{Definitions}
\crefname{quest}{Question}{Questions}
\crefname{example}{Example}{Examples}
\crefname{prob}{Problem}{Problems}
\crefname{claim}{Claim}{Claims}
\crefname{remark}{Remark}{Remarks}

\AtBeginEnvironment{lem}{\crefalias{thm}{lem}}
\AtBeginEnvironment{prop}{\crefalias{thm}{prop}}
\AtBeginEnvironment{cor}{\crefalias{thm}{cor}}
\AtBeginEnvironment{conj}{\crefalias{thm}{conj}}
\AtBeginEnvironment{defin}{\crefalias{thm}{defin}}
\AtBeginEnvironment{quest}{\crefalias{thm}{quest}}
\AtBeginEnvironment{example}{\crefalias{thm}{example}}
\AtBeginEnvironment{prob}{\crefalias{thm}{prob}}
\AtBeginEnvironment{claim}{\crefalias{thm}{claim}}
\AtBeginEnvironment{remark}{\crefalias{thm}{remark}}

\DeclareTextFontCommand{\emph}{\color{blue}\em}
\usepackage{tikz}

\DeclareMathOperator{\Supp}{Supp}
\DeclareMathOperator{\lr}{lr}
\DeclareMathOperator{\lb}{lb}
\DeclareMathOperator{\br}{br}

\title{Separable elements and Bruhat orders}

\author{Christian Gaetz}
\address[Gaetz]{Department of Mathematics, University of California, Berkeley, CA, USA.}
\email{\href{mailto:gaetz@berkeley.edu}{{\tt gaetz@berkeley.edu}}}
\author{Yibo Gao}
\address[Gao]{Beijing International Center for Mathematical Research, Peking University, Beijing, China.}
\email{\href{mailto:gaoyibo@bicmr.pku.edu.cn}{{\tt gaoyibo@bicmr.pku.edu.cn}}}

\date{\today}

\begin{document}

\begin{abstract}
We prove the curious fact that, in any Coxeter group, the intersection of a principal left weak order ideal $[e,x]_L$ with a principal right weak order ideal $[e,y]_R$ has a unique maximal element $\lr(x,y)$ under \textit{Bruhat} order. We apply this to resolve a conjecture of Hart and Rowley on involutions in weak order ideals.

This fact arises from, and is applied to, our study of the theory of \textit{separable elements} in finite Weyl groups. We use it to prove that for $U=[e,u]_R$ in a Coxeter group $W$, the multiplication map $W/U\times U\rightarrow W$ is surjective, where $W/U$ is the Bj\"{o}rner--Wachs generalized quotient. This resolves and generalizes a conjecture of the authors. For finite Weyl groups, we further prove that this map is bijective if and only if $u$ is separable, thereby classifying the \textit{splittings} of finite Weyl groups into weak order intervals, and resolving the natural extension of a question of Bj\"orner--Wachs.  
\end{abstract}

\maketitle

\section{Introduction} \label{sec:intro}
This paper is the third in a sequence \cite{separable-2,separable-1} developing the theory of \textit{separable elements} in finite Weyl groups $W$. In \cite{separable-1}, we generalize the well-studied class of separable permutations to arbitrary finite Weyl groups and develop their basic properties, including the rank generating functions of their lower intervals in weak order and a Billey--Postnikov pattern criterion for recognizing them. Separable elements were subsequently shown to be distinguished by their actions on the canonical basis \cite{Gossow-Yacobi}. 

For $U \subseteq W$, Bj\"{o}rner and Wachs \cite{Bjorner-Wachs} define the \emph{generalized quotient} $W/U$: 
\[W/U\coloneqq \{w\in W\mid \ell(wu)=\ell(w)+\ell(u)\text{ for all }u\in U\}.\]
Subsets $X,Y$ form a \emph{splitting} of $W$ if multiplication is a length-additive bijection $X \times Y \to W$. In \cite{separable-2} it was shown that the splittings of the symmetric group $S_n$ are exactly the pairs $(S_n/[e,u]_R, [e,u]_R)$ with $u$ separable, resolving a question from \cite{Bjorner-Wachs}. We also showed that $S_n/[e,u]_R \times [e,u]_R \to S_n$ is \textit{surjective} even for non-separable $u$. 

In the present paper, we extend these two main theorems to all finite Weyl groups, resolving conjectures from \cite{separable-2}. A key new idea is the following result relating the left weak, right weak, and Bruhat orders on any Coxeter group.

\begin{thm}\label{thm:left-right-max}
Let $W$ be any Coxeter group and $x,y\in W$. Then $[e,x]_L\cap [e,y]_R$ contains a unique maximal element, denoted $\lr(x,y)$, under Bruhat order.
\end{thm}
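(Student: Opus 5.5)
The plan is to induct on $\ell(x)+\ell(y)$. Write $S(x,y)\coloneqq[e,x]_L\cap[e,y]_R$. This set is finite and contains $e$, so it suffices to show it is \emph{directed} in Bruhat order: any two elements have a common upper bound in $S(x,y)$.

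Two facts drive the reduction. Since $z\le_L x$ means $z$ is a suffix of a reduced word of $x$, we get $D_R(z)\subseteq D_R(x)$; dually $z\le_R y$ gives $D_L(z)\subseteq D_L(y)$. The base case $y=e$ (or $x=e$) is trivial. Otherwise fix $s\in D_L(y)$ and split $S(x,y)$ by whether $s$ is a left descent of $z$:
\[
S(x,y)=\{z: sz<z\}\sqcup\{z: sz>z\}.
\]
If $sz<z$, then $sz\le_R sy$, and $sz\le_L z\le_L x$ (a suffix of a suffix). So $z\in s\cdot S(x,sy)$, and $z=s\cdot(sz)$ is a length-increasing left multiplication.

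By induction $S(x,sy)$ has a Bruhat maximum $m$, and every element of $S(x,sy)$ lies below $m$. I would take as candidate for $\lr(x,y)$ whichever of $m$ and $sm$ is longer and still lies in $S(x,y)$. The comparison should follow from the lifting property (the $Z$-property of Deodhar): if $w\le m$ and $s\in D_L(m)$ or $s\notin D_L(w)$, then the appropriate one of $w$ and $sw$ lies below $\max(m,sm)$. The lifting property therefore turns Bruhat bounds for $S(x,sy)$ into Bruhat bounds for $s\cdot S(x,sy)$.

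The main obstacle is the second piece, $\{z\in S(x,y): sz>z\}$. Such $z$ need not lie in $[e,sy]_R$, so this piece is not directly controlled by the inductive hypothesis. My first attempt is to use the lattice structure of right weak order. For $z\le_R y$ with $s\notin D_L(z)$, the join $z\vee_R s$ lies below $y$. I would try to show that $z$ is Bruhat-dominated by some element of the form $sz'$ with $z'\in S(x,sy)$, or by $m$ itself. If this fails, I would instead choose the descent symmetrically from whichever of $D_R(x)$ and $D_L(y)$ makes the case analysis close. Concretely, I would pick $s$ so that $s\notin D_L(z)$ forces $z\le_R sy$, for instance via the parabolic decomposition of $y$ with respect to $\langle s\rangle$, and then reuse the lifting property as above.

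If the induction resists, a fallback is to construct $\lr(x,y)$ directly. I would define it as a Demazure-type product obtained greedily from a reduced word of $y$, keeping letters while the prefix remains a suffix of $x$. I would then verify maximality using the subword characterization of Bruhat order.
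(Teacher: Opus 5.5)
Your reduction to directedness is fine, and so is your observation that $z\in S(x,y)$ with $sz<z$ gives $sz\in S(x,sy)$. The inductive step, however, fails at the choice of candidate.

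\textbf{The candidate need not lie in $S(x,y)$.} Since $y=s\cdot(sy)$ is reduced, $sy$ is a \textit{suffix} of $y$, i.e.\ $sy\le_L y$, not $sy\le_R y$. So $m=\lr(x,sy)$ need not lie in $[e,y]_R$. The element $sm$ does satisfy $sm\le_R y$, but it need not lie in $[e,x]_L$. Concretely, in type $A_2$ take $x=s_2$ and $y=s_1s_2$, so $s=s_1$ is forced. Then $S(x,sy)=\{e,s_2\}$, $m=s_2$ and $sm=s_1s_2$, while $S(x,y)=\{e\}$. Neither candidate is even in $S(x,y)$.

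The structural problem is that lifting only gives Bruhat bounds such as $z\le_B sm$. The constraint $z\le_L x$ is a weak-order condition and is not inherited along Bruhat bounds, so such bounds say nothing about upper bounds inside $S(x,y)$.

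\textbf{The second piece stays open.} The piece $\{z: sz>z\}$ is unresolved for the same reason. Your join idea gives $z\vee_R s=sv$ with $v\le_R sy$, and lifting gives $z\le_B v$. But again $v$ need not satisfy $v\le_L x$.

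\textbf{The fallback has the same defect.} A greedy subword of a reduced word for $y$ lies in $[e,x]_L\cap[e,y]_B$ but generally not in $[e,y]_R$; in the example above it returns $s_2\not\le_R s_1s_2$. Taking instead the longest prefix that is a suffix of $x$ depends on the chosen reduced word. For $x=s_3$, $y=s_1s_3$ in $A_3$, the word $s_1s_3$ gives $e$, although $s_3\in S(x,y)$.

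\textbf{What is missing} is a way to impose the two weak-order constraints one at a time.
\begin{itemize}
\item The paper first proves that the mixed sets $\mathcal{LB}(x,y)=[e,x]_L\cap[e,y]_B$ and $\mathcal{BR}(x,y)=[e,x]_B\cap[e,y]_R$ have Bruhat maxima $\lb(x,y)$ and $\br(x,y)$.
\item This is where a greedy construction is legitimate. Apply the downward Demazure operators $\delta_s(g)=\min_{\le_B}\{g,gs\}$ to $x$, one for each letter of a reduced word of $y$, from right to left. This produces $u$ with $x=uz$ reduced, and $z=\lb(x,y)$.
\item Maximality of $z$ uses two facts: each $\delta_s$ is monotone, and $v\mapsto v^{-1}x$ reverses Bruhat order on $[e,x]_R$.
\item One then alternates: $z_0=x$, $z_{2i+1}=\br(z_{2i},y)$, $z_{2i+2}=\lb(x,z_{2i+1})$. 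This sequence is Bruhat-decreasing, so it stabilizes at some $z$ with $z=\br(z,y)\le_R y$ and $z=\lb(x,z)\le_L x$.
\item A direct induction on $i$ shows that every $w\in[e,x]_L\cap[e,y]_R$ satisfies $w\le_B z_i$ for all $i$. Hence $z$ is the maximum.
\end{itemize}
No induction on $\ell(x)+\ell(y)$ or splitting by a descent is needed.
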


\noindent This \emph{left-right meet} operation on a Coxeter group strikes us as quite a  natural one, and we hope it will find many connections and applications to Coxeter group theory. We give several applications below.

Hart and Rowley conjectured \Cref{cor:Hart-Rowley} for finite Coxeter groups \cite[Conj.~1.2]{Hart-Rowley}; we settle their conjecture as well as its generalization to arbitrary Coxeter groups.

\begin{cor}[Conj.~1.2 of \cite{Hart-Rowley}]\label{cor:Hart-Rowley}
Let $W$ be a Coxeter group and let $w\in W\setminus\{e\}$. Then $[e,w]_R$ contains a unique involution $a(w)$ of maximal length. Moreover, every involution $\iota\leq_R w$ satisfies $\iota\leq_B a(w)$.
\end{cor}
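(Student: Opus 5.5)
The plan is to apply \Cref{thm:left-right-max} with $x=w^{-1}$ and $y=w$. The first step is to rewrite the right weak ideal under inversion: $\iota\le_R w$ means $\ell(w)=\ell(\iota)+\ell(\iota^{-1}w)$, i.e.\ $\iota$ is a length-additive prefix of $w$. Inverting, this is equivalent to $\iota^{-1}\le_L w^{-1}$. Hence inversion gives a bijection $[e,w]_R\to[e,w^{-1}]_L$. Since Bruhat order is invariant under inversion, this bijection is also a Bruhat-order isomorphism.

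Set $M=[e,w^{-1}]_L\cap[e,w]_R$. This set is stable under inversion: if $z\in M$, then $z\le_R w$ gives $z^{-1}\le_L w^{-1}$, and $z\le_L w^{-1}$ gives $z^{-1}\le_R w$. Every involution $\iota\le_R w$ lies in $M$, because $\iota=\iota^{-1}\le_L w^{-1}$. By \Cref{thm:left-right-max}, $M$ has a unique Bruhat-maximal element $m=\lr(w^{-1},w)$. As $M$ is inversion-stable and inversion is a Bruhat automorphism, $m^{-1}$ is also Bruhat-maximal in $M$, so $m^{-1}=m$ and $m$ is an involution. Moreover $m\in M\subseteq[e,w]_R$.

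Now put $a(w)=m$. For any involution $\iota\le_R w$ we have $\iota\in M$. Since $M$ is finite and $m$ is its unique maximal element, every element of $M$ lies Bruhat-below $m$, so $\iota\le_B m$. In particular $\ell(\iota)\le\ell(m)$, with equality only if $\iota=m$, because Bruhat order is graded by length. Therefore $m$ is the unique involution of maximal length in $[e,w]_R$, and it Bruhat-dominates all the others.

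The hypothesis $w\ne e$ plays no real role: for $w=e$ the involution is $e$. The step I expect to need the most care is the passage from ``unique maximal element'' to ``greatest element''. This uses finiteness of $[e,w]_R$, since in a finite poset every element lies below some maximal element. The rest is formal once \Cref{thm:left-right-max} is assumed.
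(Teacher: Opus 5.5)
Your route is the paper's: take $a(w)=\lr(w^{-1},w)$ and get $a(w)=a(w)^{-1}$ from the inversion symmetry of $[e,w^{-1}]_L\cap[e,w]_R$. Then every involution $\iota\le_R w$ lies in this set, so \Cref{thm:left-right-max} gives $\iota\le_B a(w)$, and gradedness finishes.

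There is one gap: your remark that the hypothesis $w\neq e$ plays no role. In the statement, and in the paper, an involution is an element of order exactly $2$. That is why $w\neq e$ is assumed: $[e,e]_R=\{e\}$ contains no involution.

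Your argument that $m^{-1}=m$ only shows $m^2=e$. So you have not yet shown that $m$ is an involution, or even that $[e,w]_R$ contains one. Without that, the existence claim in the corollary is unproved.

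The fix is one line, and the paper includes it. Since $w\neq e$, there is $s\in D_L(w)$. Then $s\le_R w$ and $s=s^{-1}\le_L w^{-1}$, so $s\in M$. Hence $s\le_B m$, which forces $m\neq e$.

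With that step added, your proof is complete. Your handling of ``unique maximal'' versus ``greatest'' via finiteness of $[e,w]_R$ is fine. The proof of \Cref{thm:left-right-max} in fact shows directly that $\lr(x,y)$ is a greatest element.
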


\Cref{thm:left-right-max} is also instrumental in our proof of the surjectivity theorem for general Coxeter groups. Surjectivity was previously known only in finite types $A$ \cite{separable-2} and $B$ \cite{separable-typeB}.

\begin{thm}[Conj.~1 of \cite{separable-2}]\label{thm:surjection}
Let $W$ be any Coxeter group, and let $u \in W$ and $U=[e,u]_R$. Then the multiplication map $W/U\times U\rightarrow W$ is a surjection.
\end{thm}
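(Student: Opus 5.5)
We must show that every $w\in W$ factors as $w=vu'$ with $u'\in U=[e,u]_R$, $v\in W/U$, and $\ell(w)=\ell(v)+\ell(u')$. The plan is to choose $u'$ canonically using \Cref{thm:left-right-max}. The natural candidate is $u'=\lr(w,u)$, the Bruhat-maximal element of $[e,w]_L\cap[e,u]_R$. Here $[e,w]_L$ is the set of suffixes of $w$: elements $z$ with $w=vz$ and $\ell(w)=\ell(v)+\ell(z)$. So for any $z\in[e,w]_L\cap[e,u]_R$, setting $v=wz^{-1}$ gives a length-additive factorization $w=vz$ with $z\in U$. It remains to show that for the choice $z=\lr(w,u)$ the element $v$ lies in $W/U$, i.e.\ $\ell(vy)=\ell(v)+\ell(y)$ for all $y\le_R u$.

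I would argue by contradiction. Suppose $\ell(vy)<\ell(v)+\ell(y)$ for some $y\in U$, and pick such $y$ of minimal length. Write $y=y's$ with $\ell(y')=\ell(y)-1$. Then $y'\in U$, and by minimality $vy'$ is length-additive. A standard exchange argument then gives $\ell(vy's)<\ell(vy')$. Combined with length-additivity of $vy'$, this yields a reflection $t$ with $vy's=vt\,y'$ where $vt<v$. Equivalently, there is a reflection $t$ with $vt<v$ such that $t$ lies in the left inversion set of $y$ but not of $y'$.

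The task is now to use this reflection to produce an element of $[e,w]_L\cap[e,u]_R$ that is not Bruhat-below $z$, which contradicts maximality. A cleaner route is to first reduce to the case $\ell(y)=1$. Since $[e,u]_R$ is closed under taking prefixes, a minimal counterexample $y$ can be taken to be a simple reflection $s\le_R u$ with $vs<v$, provided one shows that $W/U$ is characterized by its behaviour on the atoms together with closure. This reduction is not automatic for generalized quotients, so I would instead keep a general $y$ and argue directly.

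Consider $z''=\lr(vz,y)$-type elements, or more concretely the element $z_1=s_v z$, where $s_v$ is a right descent of $v$ chosen so that $s_vz\in[e,u]_R$. Then $z_1\in[e,w]_L$ automatically, because $w=(vs_v)(s_vz)$ is length-additive. The real content is to show that one can choose such a descent with $s_vz\le_R u$, or at least arrange for some element of $[e,w]_L\cap[e,u]_R$ to lie strictly above $z$ in Bruhat order. Since $z<_B s_vz$ whenever the lengths add, this contradicts the fact that $z$ is the unique Bruhat-maximal element. (Uniqueness matters here: the new element must be Bruhat-comparable and above $z$, which holds because $z<_B s_vz$.)

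The main obstacle is this middle step: turning the failure of length-additivity of $v\cdot y$ into a left extension of $z$ that stays inside $[e,u]_R$. My first attempt would be an induction on $\ell(u)$. Write $u=u_0s$ with $\ell(u_0)=\ell(u)-1$, and apply the result to $U_0=[e,u_0]_R$. One would then compare $\lr(w,u_0)$ with $\lr(w,u)$ using monotonicity of $\lr$ in its second argument, which follows from \Cref{thm:left-right-max} since intersections grow. Failing that, I would try using the lifting property of Bruhat order to move the reflection $t$ past $z$.
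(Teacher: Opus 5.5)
Your setup matches the paper's: take $z=\lr(w,u)$ and $v=wz^{-1}$, and derive a contradiction from $v\notin W/U$. But the proposal stops at the step that carries all the content, and the candidate you offer for that step has the wrong shape.

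You look for a single left extension $s_vz\leq_R u$ with $s_v\in D_R(v)$. This cannot be justified from $z\in\mathcal{LR}(w,u)$ and the non-reducedness of $vu$ alone. Take $S_3$ with $w=u=s_1s_2$ and $z=e$. Then $vu=s_1s_2s_1s_2$ is not reduced, yet $D_R(v)=\{s_2\}$ and $s_2\not\leq_R u$. Here $\mathcal{LR}(w,u)=\{e,s_1s_2\}$, so the larger element is two steps above $z$, with nothing in between. Any argument along your lines would therefore have to use the maximality of $z$ in an essential way. Neither the induction on $\ell(u)$ via monotonicity of $\lr$ nor the appeal to lifting indicates how.

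You also impose constraints you do not need. The new element need not be Bruhat-comparable to $z$: any element of $\mathcal{LR}(w,u)$ of length greater than $\ell(z)$ must lie below $\lr(w,u)=z$ by \Cref{thm:left-right-max}, which is impossible by grading. Likewise, the minimal counterexample $y$ is unnecessary. Every $y\in U$ is a prefix of $u$, and prefixes of reduced products are reduced, so it suffices to show $\ell(vu)=\ell(v)+\ell(u)$.

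The missing ingredient is \Cref{lem:overlap-bound}. It says that if $x\leq_R g$ and $y\leq_L g$, then $\mathcal{LR}(x,y)$ contains an element of length at least $\ell(x)+\ell(y)-\ell(g)$; it is proved by induction on $\ell(x^{-1}g)$ using the exchange condition. To apply it:
\begin{enumerate}
\item Write $u=zb$ with $b=s_1\cdots s_m$ reduced. Let $j$ be the first index with $vzs_1\cdots s_j$ not reduced, and set $b_0=s_1\cdots s_{j-1}$ and $y_0=zb_0s_j\leq_R u$.
\item Your exchange observation (the deleted letter must lie in the word for $v$) gives two reduced factorizations $wb_0=v'y_0$ with $\ell(v')=\ell(v)-1$.
\item \Cref{lem:overlap-bound} then yields an element of $\mathcal{LR}(w,y_0)\subseteq\mathcal{LR}(w,u)$ of length at least $\ell(w)+\ell(y_0)-\ell(wb_0)=\ell(z)+1$. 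This is the desired contradiction, and is the content of \Cref{lem:squeeze-if-not-reduced}.
\end{enumerate}
This bookkeeping needs the failing prefix $y_0$ to extend $z$ in right weak order, so that $w\leq_R wb_0$ and the lengths telescope to $\ell(z)+1$. Your minimal-length $y$ need not extend $z$. Without that, the same exchange step yields a factorization of $vy'$, and the overlap bound then produces an element of $[e,v]_L$ rather than of $[e,w]_L\cap[e,u]_R$.
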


\noindent Despite the simple statement of \Cref{thm:surjection}, there is a rich history behind it. The numerical inequality implied by \Cref{thm:surjection} in type $A$ can be interpreted naturally in terms of linear extensions of posets, and was first proved in that setting by Sidorenko \cite{Sidorenko}. Interest in Sidorenko's inequality has continued with further proofs by Bollob\'as--Brightwell--Sidorenko \cite{BBS} using a special case of the Mahler conjecture from convex geometry together with Lov\'asz's (weak) Perfect Graph Theorem, and more recently by the authors in \cite{hull-metric} and by Chan--Pak--Panova \cite{CPP}. None of these proofs implies the group-theoretic statement in \Cref{thm:surjection}. \Cref{thm:surjection} also proves the conjectured \textit{hull metric inequality} of \cite[Conj.~1.2]{hull-metric} for pairs of elements whose convex hull is all of $W$.

We also extend the classification of splittings into weak order intervals\footnote{It is noted in \cite{separable-2} that the Weyl group of $F_4$ has a splitting $(X,Y)$ where $Y$ is not an interval in weak order. Such splittings are ruled out in the type $A$ result in \textit{op. cit.} but not by \Cref{thm:separable-bijection}.} from types $A$ and $B$ to all finite Weyl groups. This answers the natural extension of the question of Bj\"{o}rner--Wachs.

\begin{thm}[Conj.~2 of \cite{separable-2}]\label{thm:separable-bijection}
Let $U=[e,u]_R$ in a finite Weyl group $W$. Then the multiplication map $W/U\times U\rightarrow W$ is a bijection, i.e., $(W/U,U)$ is a splitting of $W$, if and only if $u$ is separable.
\end{thm}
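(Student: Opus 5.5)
By \Cref{thm:surjection}, the multiplication map $W/U\times U\to W$ is surjective for every $u$, and it is length-additive by the definition of $W/U$. So bijectivity is equivalent to the cardinality identity $|W/U|\cdot|U|=|W|$. Moreover, bijectivity always forces $|W/U|\cdot|U|\le |W|$, with equality exactly when the map is injective. The plan is to prove both directions by comparing $|W/U|\cdot|U|$ with $|W|$. The natural tool is the rank generating functions of lower weak intervals of separable elements from \cite{separable-1}.

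\textbf{Separable $\Rightarrow$ bijective.} From \cite{separable-1}, for $u$ separable the rank generating function of $[e,u]_R$ factors, and $W/U$ is itself a lower interval $[e,u']_R$ with $u'$ separable. I expect $u'$ to be given by an explicit formula in terms of $w_0$ and the element $\lr$-type construction of \Cref{thm:left-right-max}, for instance $u'=\lr(w_0,w_0u^{-1})$ or similar. The generating functions should then multiply to $\sum_{w\in W}q^{\ell(w)}$. If the splitting result of \cite{separable-1} (that separable $u$ gives $W=W/U\cdot U$ with generating functions multiplying) is already available, this direction is immediate from it. Otherwise I would prove injectivity directly by the recursive structure of separable elements: a separable $u$ decomposes along a parabolic subgroup $W_J$ with $J=S\setminus\{s\}$, where $s$ is a leaf of the Dynkin diagram. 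I would induct on the rank using the parabolic factorization $W=W^J\cdot W_J$ and check that $W/U$ factors compatibly.

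\textbf{Bijective $\Rightarrow$ separable.} Suppose $u$ is not separable. By the Billey--Postnikov pattern criterion of \cite{separable-1}, $u$ contains a minimal non-separable pattern in some rank-two or rank-three parabolic subgroup $W_{J}$. These patterns are $3412$ and $4231$ type-$A$ analogues, plus a small list in types $B_2/B_3$, $G_2$ and $D_4$. I would show non-injectivity by producing two distinct pairs $(v,x),(v',x')\in W/U\times U$ with $vx=v'x'$.

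The key steps for this are:
\begin{itemize}
\item Verify the collision directly in each minimal pattern group $W_J$. This is a finite check.
\item Lift the collision to $W$ using the parabolic decomposition $u=u^J u_J$ from the pattern embedding. Billey--Postnikov patterns are defined through the projection $u\mapsto u_J$ for the appropriate $J$, so right multiplication by $u^J$ should transport the collision.
\end{itemize}

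The main obstacle is the lifting step: controlling $W/U$ for the big group in terms of $W_J/[e,u_J]_R$. An element $v\in W_J/[e,u_J]_R$ need not lie in $W/U$. I would try first to use \Cref{thm:left-right-max} to replace $v$ by a Bruhat-maximal modification that lies in $W/U$ while preserving the product. Failing that, I would argue numerically instead. Surjectivity holds in every rank, so it suffices to show the strict inequality $|W/U|\cdot|U|>|W|$. I would try to get this by induction on rank, splitting $W$ into $W^J$-cosets and using surjectivity on each coset together with strictness on the coset containing the pattern.
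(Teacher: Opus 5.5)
There is a genuine gap, and it is in the non-separable direction. Your separable direction is fine as a citation; the paper does exactly this, invoking \cite[Thm.~3]{separable-2}. Two small slips there: surjectivity gives $|W/U|\cdot|U|\ge|W|$, not $\le$; and $W/U=[e,w_0u^{-1}]_L$ is a \emph{left} weak interval, while $\lr(w_0,w_0u^{-1})$ is simply $w_0u^{-1}$.

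For the converse, Billey--Postnikov patterns are restrictions $u|_{\Phi'}$ to arbitrary root subsystems $\Phi'=E'\cap\Phi$, not to standard parabolic subgroups. The map $u\mapsto u|_{\Phi'}$ is not compatible with products, weak-order intervals, or generalized quotients, so there is no mechanism for lifting a collision from $W(\Phi')$ to $W$. Also, the type $A$ separability patterns are $2413$ and $3142$; $3412$ and $4231$ are the smoothness patterns. If you restrict to standard parabolic $J$ so that lifting makes sense, the minimal bad configurations are no longer of rank at most $3$. For instance, $w=25314\in S_5$ contains $2413$, yet every $w_J$ with $J\subsetneq S$ is separable. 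So the finite check in a small pattern group is unavailable. The step that actually carries the theorem is missing: producing a collision when $u$ is non-separable but all its proper parabolic restrictions are separable, in unbounded rank. Your numerical fallback does not supply it either. Surjectivity on each coset only yields $\ge$, and strictness on the coset containing the pattern is precisely the claim to be proved.

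Here is how the paper closes this. Put $x=w_0u^{-1}$ and $y=u$. A nonidentity $z\in[e,x]_L\cap[e,y]_R$ (a witness) gives the colliding pairs $(e,z)$ and $(z,e)$.

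Lifting from a parabolic does work, but with the \emph{left} parabolic component ${}_Jy\le_R y$ rather than $u_J$. Note that $u_J\le_L u$, not $u_J\le_R u$, so $[e,u_J]_R\not\subseteq U$ in general. \Cref{lem:restriction-w0} gives $x_J\cdot{}_Jy=w_0(J)$. Hence $W(J)/[e,{}_Jy]_R=[e,x_J]_L\subseteq[e,x]_L=W/U$ and $[e,{}_Jy]_R\subseteq U$. Any collision in $W(J)$ is therefore literally a collision in $W$, and the obstacle you flagged disappears.

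Taking $J$ minimal with $x_J$ non-separable reduces to $W$ irreducible with every proper parabolic restriction separable. The real content is then an explicit witness in that situation:
\begin{itemize}
\item in types $D$ and $E$, the red/blue root-coloring argument of \Cref{lem:type-DE-main} (minimum-height roots of the opposite color are path roots, the leaf/pivot analysis, and the element of \Cref{claim:typeDE-final-witness});
\item in $F_4$ and $G_2$, a computer check;
\item in types $A$, $B$, $C$, citations or explicit witnesses.
\end{itemize}
A construction of this kind, valid in all ranks, is the missing idea in your plan.
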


Finally, we characterize separable elements directly in terms of the left-right meet.

\begin{cor}
\label{cor:separable-iff-LR-e}
An element $u$ in a finite Weyl group $W$ is separable if and only if
\[
\lr(w_0u^{-1},u)=e.
\]
\end{cor}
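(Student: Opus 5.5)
The plan is to reduce the statement to \Cref{thm:separable-bijection} by identifying the set $[e,w_0u^{-1}]_L\cap[e,u]_R$ with $W/U\cap U$, where $U=[e,u]_R$.

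\textbf{Step 1: $W/U=[e,w_0u^{-1}]_L$.} First observe that $w\in W/U$ if and only if $\ell(wu)=\ell(w)+\ell(u)$. One direction is trivial. For the other, any $u'\le_R u$ satisfies $u=u'b$ with $\ell(u)=\ell(u')+\ell(b)$. Additivity for $wu=(wu')b$ then forces $\ell(wu')=\ell(w)+\ell(u')$.

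Next use $\ell(w_0x)=\ell(w_0)-\ell(x)$ and write $w_0u^{-1}=(w_0u^{-1}w^{-1})\,w$. Then $w\le_L w_0u^{-1}$ is equivalent to
\[
\ell(w_0)-\ell(wu)+\ell(w)=\ell(w_0)-\ell(u),
\]
which is exactly $\ell(wu)=\ell(w)+\ell(u)$. Hence $[e,w_0u^{-1}]_L\cap[e,u]_R=W/U\cap U$.

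\textbf{Step 2: $\lr(w_0u^{-1},u)=e$ iff $W/U\cap U=\{e\}$.} By \Cref{thm:left-right-max}, $\lr(w_0u^{-1},u)$ is the unique Bruhat-maximal element of this intersection. In a finite poset, a unique maximal element lies above every element. So $\lr(w_0u^{-1},u)=e$ exactly when every element of $W/U\cap U$ is $\le_B e$, i.e.\ when the intersection is $\{e\}$.

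\textbf{Step 3: bijectivity iff $W/U\cap U=\{e\}$.} One direction is immediate. If $e\ne z\in W/U\cap U$, then $(z,e)$ and $(e,z)$ are distinct pairs in $W/U\times U$, both mapping to $z$. So the multiplication map is not injective, and by \Cref{thm:separable-bijection} $u$ is not separable. This proves that separability implies $\lr(w_0u^{-1},u)=e$.

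For the converse, assume $u$ is not separable and produce a nontrivial element of $W/U\cap U$. I would not try to get injectivity directly from trivial intersection. Instead I would use the Billey--Postnikov pattern characterization of separable elements from \cite{separable-1}: a non-separable $u$ contains a forbidden pattern in some rank-two parabolic subgroup $W_J$ (in type $A$, the patterns $3412$ and $2413$). Taking the parabolic factorization $u=u^J u_J$, I would build a nontrivial $z\in W_J$ from the pattern with $z\le_R u$. For example, in the rank-two subgroup, $z$ would be a prefix of $u_J$ that is also a prefix of $w_0(J)u_J^{-1}$.

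The remaining task is to lift the condition $\ell(zu)=\ell(z)+\ell(u)$ from $W_J$ to $W$. I would attempt this with the standard compatibility of the generalized quotient with parabolic decompositions, possibly via the surjectivity statement \Cref{thm:surjection} applied inside $W_J$.

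\textbf{Main obstacle.} I expect the hardest step to be this converse: extracting an explicit nontrivial common element of $[e,w_0u^{-1}]_L$ and $[e,u]_R$ from a forbidden pattern. In particular, checking that length-additivity survives passing from $W_J$ to $W$ may need a case analysis over the rank-two patterns in types $B_2$ and $G_2$ as well as $A_2$.
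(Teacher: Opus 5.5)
Your Steps 1 and 2 are correct, and so is the direction ``separable $\Rightarrow \lr(w_0u^{-1},u)=e$'' in Step 3. The converse, however, is not proved, and the route you sketch does not work.

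\textbf{The premise is false.} In simply-laced types every element of a rank-two parabolic subgroup is separable. Moreover, a non-separable element can have all of its proper parabolic restrictions separable. An example is $2413\in S_4$; note that the type $A$ patterns are $2413$ and $3142$, while $3412$ is separable. So in general there is no rank-two parabolic pattern to exploit. Billey--Postnikov patterns also live in arbitrary, typically non-parabolic, root subsystems.

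\textbf{The parabolic component is the wrong one.} Even with a parabolic $J$ in hand, $u_J$ is a suffix of $u=u^Ju_J$, so a prefix of $u_J$ need not satisfy $z\le_R u$. With $x=w_0u^{-1}$, the correct reduction uses ${}_Ju\le_R u$, $x_J\le_L x$, and \Cref{lem:restriction-w0}, namely $x_J\cdot{}_Ju=w_0(J)$.

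\textbf{This is the hard part.} Producing a nonidentity element of $\mathcal{LR}(w_0u^{-1},u)$ directly from non-separability is the hard content of the paper. It takes all of \Cref{lem:type-DE-main} plus separate arguments in the other types, and a rank-two case check cannot replace it.

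\textbf{The missing idea} is exactly the step you chose to avoid: passing from non-injectivity to a nontrivial element of $W/U\cap U$. This is \Cref{thm:bijection-iff-LRempty}, and it goes as follows.
If $u$ is not separable, \Cref{thm:separable-bijection} says the multiplication map is not bijective. By \Cref{thm:surjection} it is surjective, so some $w$ has two distinct factorizations $w=ab=a'b'$ with $a,a'\in W/U$ and $b\neq b'$ in $U$.
Then $b,b'\in\mathcal{LR}(w,u)$. Setting $r=\lr(w,u)$, we may assume $b<_B r$.
Since $a\le_R w$ and $r\le_L w$, \Cref{lem:overlap-bound} gives $v\in\mathcal{LR}(a,r)$ with $\ell(v)\ge\ell(a)+\ell(r)-\ell(w)=\ell(r)-\ell(b)>0$.
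Also $v\le_L a\le_L w_0u^{-1}$ and $v\le_R r\le_R u$, so $e\neq v\le_B \lr(w_0u^{-1},u)$ by \Cref{thm:left-right-max}.

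Trivial intersection does not obviously force injectivity on its own. What converts a second factorization into a witness is the Bruhat-maximality of $\lr(w,u)$ combined with \Cref{lem:overlap-bound}. With this in place, the corollary is just \Cref{thm:separable-bijection} combined with \Cref{thm:bijection-iff-LRempty}, which is how the paper proves it.
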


\subsection*{Outline} \Cref{sec:prelim} covers background on Coxeter groups, the weak and Bruhat orders on them, generalized quotients and splittings, and separable elements. \Cref{sec:left-right} proves \Cref{thm:left-right-max} and uses it to prove the conjecture of Hart--Rowley in \Cref{cor:Hart-Rowley}. \Cref{sec:surjection} then applies \Cref{thm:left-right-max} together with several technical lemmas to prove \Cref{thm:surjection}. Finally, \Cref{sec:non-sep} proves the remaining direction of \Cref{thm:separable-bijection}, that non-separable elements do not induce splittings of $W$.

\section{Preliminaries}\label{sec:prelim}
\subsection{Coxeter groups and Bruhat orders}
We follow \cite{Bjorner-Brenti} for background and notation on Coxeter groups. We write $e$ for the identity element of a Coxeter system $(W,S)$. An expression $w=s_1\cdots s_m$, with $s_i\in S$, is \emph{reduced} if $m$ is as small as possible, and in this case $\ell(w) \coloneqq m$ is the \emph{length} of $w$. More generally, we say a product $w_1\cdots w_r$ is \emph{reduced} if $\ell(w_1\cdots w_r)=\ell(w_1)+\cdots+\ell(w_r)$. 

The left and right \emph{weak (Bruhat) orders} are defined by
\begin{align*}
u\leq_L w
&\quad\Longleftrightarrow\quad
\ell(w)=\ell(wu^{-1})+\ell(u), \\
u\leq_R w
&\quad\Longleftrightarrow\quad
\ell(w)=\ell(u)+\ell(u^{-1}w).
\end{align*}
Thus $u\leq_L w$ means that a reduced expression for $u$ occurs as a suffix of a reduced expression for $w$, while $u\leq_R w$ means that it occurs as a prefix. The closed intervals in these orders are denoted $[u,w]_L$ and $[u,w]_R$, respectively.

Let $T=\{wsw^{-1}\mid w\in W,\ s\in S\}$ be the set of \emph{reflections} in $W$. We write $T_L(w) \coloneqq \{t \in T \mid \ell(tw)<\ell(w)\}$ and $T_R(w) \coloneqq \{t \in T \mid \ell(wt)<\ell(w)\}$. It is known that $u \leq_L w$ (resp. $u \leq_R w$) if and only if $T_R(u) \subseteq T_R(w)$ (resp. $T_L(u) \subseteq T_L(w)$). The right and left \emph{descent} sets are $D_R(w)=T_R(w) \cap S$ and $D_L(w)=T_L(w) \cap S$, respectively.

The \emph{(strong) Bruhat order} is the partial order generated by the covering relations $w\lessdot_B wt$ for $t\in T$ with $\ell(wt)=\ell(w)+1$. Its closed intervals are denoted $[u,w]_B$. The \emph{subword property} says that $u\leq_B w$ if and only if some (equivalently, any) reduced expression for $w$ has a subword which is a reduced expression for $u$. The \emph{exchange condition} says that if $w=s_1\cdots s_m$ is reduced and $s\in D_R(w)$, then $ws=s_1\cdots\widehat{s_i}\cdots s_m$ for some $i$, and the resulting expression is reduced. The \emph{lifting property} says that if $u\leq_B w$ and $s\in D_R(w)$, then $us\leq_B ws$ when $s\in D_R(u)$, and $u\leq_B ws$ when $s\notin D_R(u)$. 

Bruhat order refines both weak orders, and all three orders are graded by length. Inversion preserves Bruhat order and interchanges the two weak orders.

The \emph{support} $\Supp(w)$ is the set of simple reflections occurring in some (equivalently, any) reduced expression for $w$. Equivalently, $\Supp(w)=\{s\in S\mid s\leq_B w\}$.

For $K\subseteq S$, let $W(K)$ be the corresponding \emph{standard parabolic subgroup}. The length function and the three orders on $W(K)$ agree with their restrictions from $W$. Every $w\in W$ admits a unique \emph{parabolic decomposition} $w=w^K w_K$ that is length-additive, where $w^K\in W^K\coloneqq \{w\in W\mid D_R(w)\cap K=\emptyset\}$, $w_K\in W(K)$. We will also make use of the left parabolic decomposition $w={}_K w\cdot {}^Kw$, where ${}_K w\in W(K)$ and ${}^Kw\in\{w\in W\mid D_L(w)\cap K=\emptyset\}$. 

When $W$ is finite, it has a unique element $w_0$ of maximum length, called the \emph{longest element}. It satisfies $w_0^2=e$ and $\ell(w_0w)=\ell(ww_0)=\ell(w_0)-\ell(w)$ for every $w\in W$. Left and right multiplication by $w_0$ reverse each of the three orders. We write $w_0(K)$ for the longest element of $W(K)$.

The \emph{generalized quotient} associated to a subset $U\subseteq W$ is \[
W/U \coloneqq \{w\in W\mid \ell(wu)=\ell(w)+\ell(u)\text{ for all }u\in U\}.
\] 
A pair of subsets $(X,Y)$ forms a \emph{splitting} of $W$ if the multiplication map $X\times Y\rightarrow W$ is a length-additive bijection. Parabolic decomposition gives the basic example $(W^K,W(K))$, with $W^K=W/W(K)$. If $W$ is finite and $U=[e,u]_R$, then $W/U=[e,w_0u^{-1}]_L$ \cite{Bjorner-Wachs}.

\subsection{Separable elements in finite Weyl groups}\label{sub:separable}

In this subsection, let $\Phi$ be a finite crystallographic root system with a chosen set of simple roots $\Delta=\{\alpha_s\mid s\in S\}$ and corresponding positive roots $\Phi^+$. Its Weyl group is $W=W(\Phi)$, with simple reflections $S$. Write $\Phi^-=-\Phi^+$, and use $\beta>0$ and $\beta<0$ to mean $\beta\in\Phi^+$ and $\beta\in\Phi^-$, respectively. For $\beta\in\Phi$, let $t_\beta$ denote the reflection orthogonal to $\beta$.

For $\alpha\in\Delta$ and $\beta\in\Phi$, write $[\alpha]\beta$ for the coefficient of $\alpha$ in the simple-root expansion of $\beta$. For $\beta\in\Phi^+$, define its \emph{height} by $\operatorname{ht}(\beta)=\sum_{\alpha\in\Delta}[\alpha]\beta$ and its support by $\Supp(\beta)=\{s\in S\mid [\alpha_s]\beta>0\}$. The \emph{root poset} is the partial order on $\Phi^+$ defined by $\beta\leq\gamma$ if $\gamma-\beta$ is a nonnegative linear combination of simple roots. In particular, $\alpha_s\leq\beta$ if and only if $s\in\Supp(\beta)$.

The right and left \emph{inversion sets} of $w\in W$ are
\[
I_R(w)=\{\beta\in\Phi^+\mid w\beta<0\},
\qquad
I_L(w)=I_R(w^{-1}).
\]
An element is uniquely determined by its inversion set, and a subset $A\subseteq\Phi^+$ is the (left or right) inversion set of a group element if and only if it is \emph{biconvex}:
\begin{itemize}
\item if $\beta,\gamma\in A$ and $\beta+\gamma\in\Phi^+$, then $\beta+\gamma\in A$; and
\item if $\beta,\gamma\notin A$ and $\beta+\gamma\in\Phi^+$, then $\beta+\gamma\notin A$.
\end{itemize}
For a subspace $E'$ of the ambient vector space spanned by $\Phi$, we obtain a root subsystem $\Phi'=E'\cap\Phi$, with $(\Phi')^+=E'\cap\Phi^+$. For $w\in W$, define its restriction $w|_{\Phi'}$ to be the unique element of $W(\Phi')$ such that $I_R(w|_{\Phi'})=I_R(w)\cap\Phi'$. For a standard parabolic subsystem $\Phi(K)$, we simply have $w|_{\Phi(K)}=w_K$. This map was first used by Billey and Postnikov \cite{Billey-Postnikov} to generalize pattern avoidance to finite Weyl groups and to study (rational) smoothness of Schubert varieties.

The weak orders are characterized by
\[
u\leq_L w\quad\Longleftrightarrow\quad I_R(u)\subseteq I_R(w),
\qquad
u\leq_R w\quad\Longleftrightarrow\quad I_L(u)\subseteq I_L(w).
\]
We have $T_L(w)=\{t_{\beta} \mid \beta \in I_L(w)\}$ and similarly for $T_R$. If $w=s_1\cdots s_m$ is reduced, then
\begin{align*}
I_L(w)
&=\{s_1\cdots s_{i-1}(\alpha_{s_i})\mid 1\leq i\leq m\},\\
I_R(w)
&=\{s_m\cdots s_{i+1}(\alpha_{s_i})\mid 1\leq i\leq m\}.
\end{align*}

We now recall the recursive definition of separability.
\begin{defin}[\cite{separable-1}]\label{def:separable}
We say that $w\in W$ is \emph{separable} if one of the following holds:
\begin{enumerate}
\item[(S1)] $\Phi$ has rank at most one.
\item[(S2)] $\Phi=\Phi_1\oplus\cdots\oplus\Phi_r$ is reducible, and the restriction $w|_{\Phi_i}$ is separable for every $i$.
\item[(S3)] $\Phi$ is irreducible, and there is a simple root $\alpha_s\in\Delta$ such that $w_{S\setminus\{s\}}$ is separable and either
\[
\{\beta\in\Phi^+\mid \alpha_s\leq\beta\}\subseteq I_R(w)
\quad\text{or}\quad
\{\beta\in\Phi^+\mid \alpha_s\leq\beta\}\cap I_R(w)=\emptyset.
\]
\end{enumerate}
In (S3), we call $\alpha_s$ a \emph{full pivot} in the first case and an \emph{empty pivot} in the second. We also refer to the corresponding simple reflection or Dynkin vertex as a pivot.
\end{defin}
By \cite[Prop.~5.2]{separable-1}, every standard parabolic restriction of a separable element is separable. Separability is also preserved by inversion and multiplication on either side by $w_0$ \cite[Cor.~2]{separable-2}. In particular, if $xy=w_0$, then $x$ is separable if and only if $y$ is separable.

Finally, if $u$ is separable and $U=[e,u]_R$, then $(W/U,U)$ is a splitting of $W$ by \cite[Thm.~3]{separable-2}. Thus the separable direction of \Cref{thm:separable-bijection} is already known; the remaining direction is the subject of the non-separable cases in \Cref{sec:non-sep}.

\section{Intersecting left and right weak order intervals}\label{sec:left-right}
We now prove \Cref{thm:left-right-max}. Let $W$ be any Coxeter group, and for $x,y\in W$, consider the following sets of elements:
\begin{align*}
\mathcal{LB}(x,y)&\coloneqq \{w\in W\mid w\leq_L x\text{ and }w\leq_B y\},\\
\mathcal{BR}(x,y)&\coloneqq \{w\in W\mid w\leq_B x\text{ and }w\leq_R y\}.
\end{align*}

\Cref{lem:intersection-left-bruhat} is known at some level of generality, see \Cref{rem:matroid}. We include a proof here for the convenience of the reader and to cover all Coxeter groups.

\begin{lem}\label{lem:intersection-left-bruhat}
The set $\mathcal{LB}(x,y)$ (resp. $\mathcal{BR}(x,y)$) contains a unique maximal element $\lb(x,y)$ (resp. $\br(x,y)$) under Bruhat order. 
\end{lem}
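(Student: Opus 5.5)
I would prove the statement for $\mathcal{LB}(x,y)$ only. The statement for $\mathcal{BR}(x,y)$ then follows by inversion: $w\mapsto w^{-1}$ preserves Bruhat order and interchanges $\leq_L$ and $\leq_R$, so $\mathcal{BR}(x,y)=\mathcal{LB}(x^{-1},y^{-1})^{-1}$ and $\br(x,y)=\lb(x^{-1},y^{-1})^{-1}$. Since $e\in\mathcal{LB}(x,y)$, the set is nonempty, and it is finite. So it suffices to produce an element $m\in\mathcal{LB}(x,y)$ with $w\leq_B m$ for every $w\in\mathcal{LB}(x,y)$. I would do this by induction on $\ell(x)$. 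The base case $x=e$ is trivial.

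For the inductive step, fix $s\in D_R(x)$ and put $x'=xs$, so $\ell(x')=\ell(x)-1$. The elements $w\leq_L x$ are the suffixes of reduced expressions of $x$. I would split them according to whether $s\in D_R(w)$.
\begin{enumerate}
\item[(a)] If $s\in D_R(w)$, then $w\leq_L x$ is equivalent to $ws\leq_L x'$, by cancelling the final letter $s$.
\item[(b)] If $s\notin D_R(w)$, I want to show $w\leq_L x'$, using the criterion $T_R(w)\subseteq T_R(x)$ together with $s\in T_R(x)\setminus T_R(w)$. If this fails in general, I would replace it by the weaker statement $w\leq_B m'$ for a suitable element $m'$ built from $x'$, which is all the argument needs.
\end{enumerate}
Granting (b), every $w\in\mathcal{LB}(x,y)$ lies either in $\mathcal{LB}(x',y)$ or in $\mathcal{LB}(x',ys)\cdot s$ (when $s\in D_R(y)$), or in a similar set when $s\notin D_R(y)$. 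The correspondence $w\leftrightarrow ws$ in case (a) is controlled by the lifting property applied to $w\leq_B y$ and the descent $s$.

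By induction, $m_1=\lb(x',y)$ exists. In the case $s\in D_R(y)$, so does $m_2=\lb(x',ys)$. I would then define
\[
m=\begin{cases} m_2s & \text{if } s\in D_R(y)\text{ and } \ell(m_2s)>\ell(m_2),\\ m_1 & \text{otherwise,}\end{cases}
\]
and check three things. First, $m\in\mathcal{LB}(x,y)$. This holds because $m_2\leq_L x'$ with $s\notin D_R(m_2)$ gives $m_2s\leq_L x's=x$, and $m_2\leq_B ys$ gives $m_2s\leq_B y$. Second, every $w$ of type (a) satisfies $w\leq_B m$. Here I apply induction to $ws$ and then multiply back by $s$, using the lifting property (the ``$Z$-property'' of Deodhar) to compare $ws\leq_B m_2$ with $w$ versus $m_2s$. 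Third, every $w$ of type (b) satisfies $w\leq_B m$, which follows from $w\leq_B m_1$ once I show $m_1\leq_B m$. That last inequality should again follow from the lifting property: $m_1\leq_B y$ and $s\in D_R(y)$ give $m_1\leq_B ys$ or $m_1s\leq_B ys$, and in either case $m_1$ is Bruhat-below $m_2$ or $m_2s$.

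I expect the main obstacle to be the interaction between the two orders in case (b) and in the comparison $m_1\leq_B m$. The left weak order does not behave well under removing a right descent from $w$ when $s\notin D_R(w)$, so the clean dichotomy ``$w\leq_L x'$ or $ws\leq_L x'$'' may need to be replaced by a Bruhat-level statement. My first attempt there is to use the subword property directly. Fix a reduced word $x=s_1\cdots s_k s$, and view each $w\leq_L x$ as a suffix of some reduced word of $x$. Then run a greedy/Demazure-style argument: scanning the letters of $x$ from right to left, keep a letter whenever doing so keeps the partial product Bruhat-below $y$ and reduced. The lifting property shows that this greedy choice dominates every other admissible suffix. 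If the reduced-word bookkeeping becomes unwieldy, a fallback is to verify that $\mathcal{LB}(x,y)$ is closed under the Bruhat join of any two of its elements (any upper bound in both posets). Finiteness then gives a unique maximum.
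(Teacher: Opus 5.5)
\paragraph{The gap.} The unhandled case is $s\in D_R(x)\setminus D_R(y)$. You only gesture at it (``a similar set''), and your fallback definition $m=m_1=\lb(xs,y)$ is wrong there. Take $W=S_3$ with simple reflections $s,t$, and let $x=s$, $y=st$. Then $s$ is forced, $s\notin D_R(y)=\{t\}$, and your $m=\lb(e,st)=e$. But $\mathcal{LB}(s,st)=\{e,s\}$.

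The failure is structural. For a type (a) element $w$ you only get $ws\in\mathcal{LB}(xs,y)$, hence $ws\le_B m_1$. You cannot multiply back by $s$, because $m_1s$ need not lie below $y$. For $x=s$, $y=t$ one has $\mathcal{LB}(x,y)=\{e\}$ while $m_1s=s$. So neither $m_1$ nor $m_1s$ works uniformly, and induction on $\ell(x)$ with an arbitrary $s\in D_R(x)$ does not close.

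Your fallbacks do not rescue this:
\begin{itemize}
\item A greedy scan of a fixed reduced word of $x$ produces subwords. These are elements Bruhat-below $x$, which are generally not suffixes, so they leave $\mathcal{LB}(x,y)$.
\item The criterion ``every pair has an upper bound in the set'' is equivalent to the lemma itself, so it gives no traction.
\end{itemize}
Also, a minor slip: the inversion identity should read $\mathcal{BR}(x,y)=\mathcal{LB}(y^{-1},x^{-1})^{-1}$.

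\paragraph{The repair.} Take the letter from $y$ instead, and induct on $\ell(y)$; the base case $\mathcal{LB}(x,e)=\{e\}$ is trivial. Pick $s\in D_R(y)$.

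If $s\notin D_R(x)$, then no $w\le_L x$ has $s$ as a right descent, since right descents of a suffix are right descents of $x$. Lifting then gives $w\le_B ys$, so $\mathcal{LB}(x,y)=\mathcal{LB}(x,ys)$.

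If $s\in D_R(x)$, your analysis goes through and simplifies:
\begin{itemize}
\item Claim (b) is true by the exchange condition, not the $T_R$ criterion. Write $x=aw$ reduced; the letter deleted to form $xs$ must lie in $a$, since otherwise $ws<w$.
\item Lifting then puts every type (b) element directly into $\mathcal{LB}(xs,ys)$.
\item Because $s\notin D_R(xs)$, every element of $\mathcal{LB}(xs,ys)$, including $m_2$, has $s$ as a right ascent. So $m=m_2s$ always, and $\lb(x,y)=\lb(xs,ys)\,s$.
\end{itemize}

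This recursion is exactly the paper's construction, which applies downward Demazure operators to $x$ along a reduced word of $y$. The difference is in how maximality is proved. The paper proves it globally: it shows the full output $u$ satisfies $u\le_B xw^{-1}$, then uses that $v\mapsto v^{-1}x$ reverses Bruhat order on $[e,x]_R$. Your corrected one-letter induction needs only exchange, lifting and the subword property, and avoids that auxiliary observation.
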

\begin{proof}
We first observe that complementation reverses Bruhat order on a right weak order interval: if $u,v\leq_R w$ and $u\leq_B v$, then $v^{-1}w\leq_B u^{-1}w$. To see this, write $v^{-1}w=a$, $u^{-1}w=b$ so that $w=ub=va$ with both products reduced. Induct on $\ell(v)$. The case $v=e$ is immediate. Otherwise, choose $s\in S$ with $vs<_B v$. Then $w=(vs)(sa)$ is still a reduced factorization, so $vs\leq_R w$ and $\ell(sa)=\ell(a)+1$. If $us<_B u$, then similarly $us\leq_R w$ and $\ell(sb)=\ell(b)+1$; the lifting property gives $us\leq_B vs$, so induction yields $sa\leq_B sb$. Since both $sa$ and $sb$ are reduced products, the left-handed lifting property gives $a\leq_B b$. If $us>_B u$, the lifting property instead gives $u\leq_B vs$, and induction yields $sa\leq_B b$, whence again $a\leq_B b$.

We now construct $\lb(x,y)$ directly and show that it is indeed the maximum of $\mathcal{LB}(x,y)$. For $s\in S$, define the \emph{downward Demazure operator} $\delta_s(g)=\min_{\leq_B}\{g,gs\}$. The lifting property implies that $\delta_s$ is order-preserving in Bruhat order, and clearly $\delta_s(g)\leq_B g$. Fix a reduced expression $y=s_1\cdots s_m$, set $u_m=x$, and recursively define $u_{i-1}=\delta_{s_i}(u_i)$ for $i=m,\ldots,1$. Let $i_1<\cdots<i_r$ be the indices at which $u_{i-1}\neq u_i$, and put $z=s_{i_1}\cdots s_{i_r}$ and $u=u_0$. By construction, $u=xz^{-1}$ and $\ell(u)=\ell(x)-r$. Since $\ell(z)\leq r$, the inequalities $\ell(x)\leq\ell(u)+\ell(z)\leq\ell(x)$ force equality throughout. Thus the expression $s_{i_1}\cdots s_{i_r}$ for $z$ and the factorization $x=uz$ are reduced. In particular, $z\leq_L x$, and the subword property gives $z\leq_B y$, so $z\in\mathcal{LB}(x,y)$. We claim that $z$ is the greatest element $\lb(x,y)$ of $\mathcal{LB}(x,y)$.

Now take any $w\in\mathcal{LB}(x,y)$ and choose a reduced subword of $s_1\cdots s_m$ representing $w$. Applying only the corresponding downward operators to $x$, in decreasing index order, gives $xw^{-1}$. Applying the full sequence of operators gives an element Bruhat-below this output. Therefore $u\leq_B xw^{-1}$, and because both $u$ and $xw^{-1}$ lie in $[e,x]_R$, the observation in the first paragraph of this proof gives $w=(xw^{-1})^{-1}x\leq_B u^{-1}x=z$. Hence $z$ is the greatest element of $\mathcal{LB}(x,y)$.

Dually, inversion maps $\mathcal{BR}(x,y)$ onto $\mathcal{LB}(y^{-1},x^{-1})$, giving $\br(x,y)=\lb(y^{-1},x^{-1})^{-1}$.
\end{proof}

\begin{remark}
\label{rem:matroid}
For finite Coxeter groups, \Cref{lem:intersection-left-bruhat} is \cite[Cor.~4.17]{Hivert-Schilling-Thiery} or \cite[Thm.~3]{Bump}. Moreover, the construction in the proof of \Cref{lem:intersection-left-bruhat} shows, in the language of \cite{Caselli-DAdderio-Marietti}, that every principal Bruhat interval $[e,x]_B$ is a Coxeter matroid, a special case of \cite[Thm.~4.4]{Caselli-DAdderio-Marietti}. Indeed, for $y\in W$, define the \emph{$y$-twisted Bruhat order} by $u\leq_B^y v$ if $y^{-1}u\leq_B y^{-1}v$. Since $\br(x,y)=\lb(y^{-1},x^{-1})^{-1}$, applying the construction with $(y^{-1},x^{-1})$ in place of $(x,y)$ shows that $y^{-1}\br(x,y)$ is obtained from $y^{-1}$ by applying the downward Demazure operators $\delta_{s_m},\ldots,\delta_{s_1}$, where $x^{-1}=s_1\cdots s_m$ is reduced. Now let $v\leq_B x$, and write $v^{-1}=s_{j_1}\cdots s_{j_p}$ as a subword with $j_1<\cdots<j_p$. Applying only $\delta_{s_{j_p}},\ldots,\delta_{s_{j_1}}$ to $y^{-1}$ gives an element that lies Bruhat-above $y^{-1}\br(x,y)$, as in the proof, and Bruhat-below $y^{-1}s_{j_p}\cdots s_{j_1}=y^{-1}v$, since each $\delta_s$ is order-preserving and $\delta_s(g)\leq_B gs$. Hence $y^{-1}\br(x,y)\leq_B y^{-1}v$ for every $v\leq_B x$. Thus $\br(x,y)$ is the unique minimum of $[e,x]_B$ in the $y$-twisted Bruhat order. Since this holds for every $y\in W$, it is precisely the minimality property defining a Coxeter matroid, and $y\mapsto \br(x,y)$ is its matroid retraction.
\end{remark}

\Cref{lem:intersection-left-bruhat} is an intermediate step in understanding the set 
\[\mathcal{LR}(x,y)\coloneqq \{w\in W\mid w\leq_L x\text{ and }w\leq_Ry\}
\]
of interest in \Cref{thm:left-right-max}, which states that $\mathcal{LR}(x,y)$ has a unique maximal element $\lr(x,y)$ under Bruhat order.

\begin{proof}[Proof of \Cref{thm:left-right-max}]
Construct a sequence of elements $\{z_i\}_{i\geq0}$ as follows. We start with $z_0=x$. Then using \Cref{lem:intersection-left-bruhat}, for $i\geq0$, define \[z_{2i+1}=\br(z_{2i},y),\quad z_{2i+2}=\lb(x,z_{2i+1}).\]
By construction, $x=z_0\geq_B z_1\geq_B z_2\geq_B\cdots$, so since $[e,x]_B$ is finite, this sequence stabilizes at some $z\eqqcolon \lr(x,y)$. Then $z=\br(z,y)\leq_R y$ and $z=\lb(x,z)\leq_L x$, so $z\in\mathcal{LR}(x,y)$. 

Now take any $w\in\mathcal{LR}(x,y)$; we argue by induction that $w\leq_B z_i$ for all $i$. The base case $w\leq_B z_0=x$ is clear as $w\leq_L x$. Suppose that $w\leq_B z_{2i}$. Then $w\in \mathcal{BR}(z_{2i},y)$, which means that $w\leq_B \br(z_{2i},y)=z_{2i+1}$ by \Cref{lem:intersection-left-bruhat}. Dually, if $w\leq_B z_{2i+1}$, then $w\in\mathcal{LB}(x,z_{2i+1})$ and thus $w\leq_B \lb(x,z_{2i+1})=z_{2i+2}$. Thus $w\leq_B z$ and we are done.
\end{proof}

\begin{proof}[Proof of \Cref{cor:Hart-Rowley}]
Let $a(w)=\lr(w^{-1},w)$. It is clear that $\lr(x,y)=\lr(y^{-1},x^{-1})^{-1}$ for $x,y\in W$, so $a(w)=a(w)^{-1}$. Since $w\neq e$, there is $s\in S$ with $s\leq_R w$; then $s\leq_L w^{-1}$, so $s\in\mathcal{LR}(w^{-1},w)$ and $s\leq_B a(w)$ by \Cref{thm:left-right-max}. Hence $a(w)\neq e$, meaning that $a(w)$ is an involution. Moreover, $a(w)\leq_R w$ since $a(w)\in\mathcal{LR}(w^{-1},w)$. Every involution $\iota\leq_R w$ satisfies $\iota=\iota^{-1}\leq_L w^{-1}$, so $\iota \in\mathcal{LR}(w^{-1},w)$. By \Cref{thm:left-right-max}, $\iota \leq_B a(w)$. Since Bruhat order is graded by length, this gives $\ell(\iota)\leq\ell(a(w))$, with equality only if $\iota=a(w)$; thus $a(w)$ is the unique involution of maximal length in $[e,w]_R$. 
\end{proof}

Determining when $\mathcal{LR}(x,y)=\{e\}$ is a central focus of the subsequent sections.

\section{The surjection theorem}
\label{sec:surjection}
The type $A$ case of \Cref{thm:surjection} was previously obtained via an intricate analysis of wiring diagrams \cite{separable-2}. The technology developed in \Cref{sec:left-right} allows for a proof in all Coxeter groups.

\begin{lem}\label{lem:overlap-bound}
Let $x,y,w\in W$ be such that $x\leq_R w$ and $y\leq_L w$. Then there exists
$z\in\mathcal{LR}(x,y)$ such that
$\ell(z)\geq\ell(x)+\ell(y)-\ell(w)$.
\end{lem}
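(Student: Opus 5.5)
The plan is to run the alternating sequence from the proof of \Cref{thm:left-right-max} and show that every term satisfies the length bound. Since that sequence stabilizes at an element of $\mathcal{LR}(x,y)$, the stable value, namely $\lr(x,y)$, will be the desired $z$. The basic input is a length estimate for $\br$ and $\lb$ obtained from the Demazure construction in \Cref{lem:intersection-left-bruhat}.

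\textbf{Step 1 (one-sided estimate).} I will first prove: \emph{if $x\leq_R w$ and $v\leq_B w$, then $\ell(\br(x,v))\geq \ell(x)+\ell(v)-\ell(w)$.} The dual statement, with $y\leq_L w$ and $v\leq_B w$, bounds $\ell(\lb(v,y))$ and follows by inversion.

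To prove it, recall that $\br(x,v)=\lb(v^{-1},x^{-1})^{-1}$. The element $\lb(v^{-1},x^{-1})$ has length $r$, where $r$ is the number of genuine drops when the downward Demazure operators along a reduced word of $x^{-1}$ are applied to $v^{-1}$. The output of that process is $u=v^{-1}\lb(v^{-1},x^{-1})^{-1}$, and it has length $\ell(v)-r$.

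Now write $w=xa$ as a reduced product. Applying the same operators to $w^{-1}=a^{-1}x^{-1}$ drops at every step and outputs $a^{-1}$. Since $v^{-1}\leq_B w^{-1}$ and each $\delta_s$ is Bruhat order-preserving, we get $u\leq_B a^{-1}$. Hence $\ell(v)-r\leq \ell(w)-\ell(x)$, which is exactly the claimed bound.

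\textbf{Step 2 (first step of the iteration).} Apply Step 1 with $v=y$, which is allowed because $y\leq_L w$ implies $y\leq_B w$. This gives $z_1=\br(x,y)$ with $\ell(z_1)\geq \ell(x)+\ell(y)-\ell(w)$, together with $z_1\leq_B x$ and $z_1\leq_R y$.

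\textbf{Step 3 (propagating the invariant).} To control $z_2=\lb(x,z_1)$ by the dual of Step 1, I need an element $w_1$ satisfying three conditions: $z_1\leq_L w_1$, $x\leq_B w_1$, and $\ell(w_1)-\ell(z_1)\leq \ell(w)-\ell(y)$. The natural candidate comes from writing $w=by$ as a reduced product. Since $z_1\leq_R y$, the product $w_1=bz_1$ is reduced and is a prefix of $w$. Moreover $z_1\leq_L w_1$, and the length condition holds with equality.

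The remaining requirement is $x\leq_B bz_1$. I would try to extract this from the explicit construction as well. Specifically, I would compare the Demazure run on $y^{-1}$ with the run on $w^{-1}$, viewing $w^{-1}=y^{-1}b^{-1}$, using the complementation-reversal observation from the first paragraph of the proof of \Cref{lem:intersection-left-bruhat}. If $bz_1$ turns out not to work, the fallback is to choose $w_1$ differently: take it Bruhat-between, keeping $z_1$ as a suffix, and use only the Bruhat comparison $x\leq_B w_1$ that Step 1 actually needs. After that, the same argument with the roles of left and right swapped handles $z_3,z_4,\dots$. The invariant to carry is a triple $(z_i, w_i, \text{bound})$ whose defect $\ell(w_i)-\ell(z_i)$ never increases.

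I expect Step 3 to be the main obstacle, namely finding the right auxiliary element $w_i$ so that the hypotheses of Step 1 are restored at each stage. If the iteration becomes unwieldy, an alternative is a direct induction on $\ell(w)$. In that approach one peels off $s\in D_R(w)$ and splits into cases according to whether $s\in D_R(y)$. One then replaces $x$ by the right weak meet of $x$ and $ws$, or uses Step 1 once and checks that $\lr(x,y)\geq_B$ the element it produces.
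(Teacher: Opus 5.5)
Your Step~1 is correct. The run computing $\br(x,v)$ is dominated, via $v^{-1}\leq_B w^{-1}$, by the run on $w^{-1}=a^{-1}x^{-1}$, which drops at every step. Also, $w_1=bz_1$ is the right candidate. But the proposal stops at the crux: you never prove $x\leq_B bz_1$, and your plan for the later stages does not work as stated.

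Your first stage used the weak-order hypotheses $x\leq_R w$ and $y\leq_L w$, whereas at the second stage you only have $x\leq_B w_1$. In general $x\not\leq_R w_1$, so the argument cannot be repeated with $w_1$ in place of $w$ and left and right swapped. For instance, work in type $A_2\times A_1\times A_1$, with $u,v$ the generators of the $A_1$ factors, and take $x=s_1uv$, $y=s_2uv$, $w=s_1s_2s_1uv$. The bound is $1$, and one gets $z_1=uv$, $b=s_2s_1$, $w_1=s_2s_1uv$, which does not have $s_1$ as a left descent. Moreover, the step $z_3=\br(z_2,y)$ needs a Bruhat comparison involving $y$, which a triple $(z_i,w_i,\text{bound})$ does not record. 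What must be carried through the whole iteration are the original reduced factorizations $w=xa=by$.

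The gap can be closed with tools you already invoke. With $v=y$ in Step~1, write $y=z_1c$, which is reduced since $z_1\leq_R y$. The output of your run is $u=y^{-1}z_1=c^{-1}$, so Step~1 actually shows $c\leq_B a$. Since $a,c\leq_L w$, the complementation-reversal observation from the proof of \Cref{lem:intersection-left-bruhat}, applied after inversion, gives $x=wa^{-1}\leq_B wc^{-1}=bz_1$.

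At the next stage write $x=e_2z_2$. The dual run along a reduced word for $z_1$, started at $x$ and at $bz_1$, gives $e_2\leq_B b$. Since $e_2,b\leq_R w$, complementation reversal gives $y=b^{-1}w\leq_B e_2^{-1}w=z_2a$, with $z_2\leq_R z_2a$. Thus the correct invariant is $y\leq_B z_{2i}a$ and $x\leq_B bz_{2i+1}$ with $a,b$ fixed. Step~1 and its dual then give $\ell(z_j)\geq\ell(y)-\ell(a)=\ell(x)-\ell(b)$ for all $j\geq1$, hence $\ell(\lr(x,y))\geq\ell(x)+\ell(y)-\ell(w)$.

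So repaired, your route is valid and yields the bound for $\lr(x,y)$ itself, but it relies on the Demazure machinery and the iteration from \Cref{thm:left-right-max}. The paper instead gives a short self-contained induction on $\ell(a)$. It peels off the last letter $s$ of $a$, so that $x\leq_R ws$ automatically and no meet is needed. If $s\in D_R(y)$, it passes to $(x,ys,ws)$. Otherwise the exchange condition deletes a letter of $b$, giving $ws=b'y$ reduced and a strictly better bound.
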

\begin{proof}
Write $w=xa=by$ with both factorizations reduced, and induct on $\ell(a)$. If $a=e$, then $y\leq_L w=x$, so we can take $z=y$. Now suppose $\ell(a)\geq1$ and take $a=a's$ reduced, where $s\in S$. Put $w'=ws=xa'$, so that $\ell(w')=\ell(w)-1$. If $\ell(ys)<\ell(y)$, put $y'=ys$. The factorization $w'=by'$ is reduced, and the induction hypothesis applied to $w'=xa'=by'$ gives $z\leq_L x$ and $z\leq_R y'\leq_R y$ with $\ell(z)\geq\ell(x)+\ell(y')-\ell(w')=\ell(x)+\ell(y)-\ell(w)$, as desired.

Now suppose $\ell(ys)>\ell(y)$. Fix reduced expressions for $b$ and $y$, whose concatenation is a reduced expression for $w$. By the exchange condition, deleting one letter from this concatenation gives a reduced expression for $w'=ws$. The deleted letter cannot lie in the expression for $y$ since otherwise canceling $b$ would express $ys$ by a word of length $\ell(y)-1$, contradicting $\ell(ys)=\ell(y)+1$. Thus the deletion occurs in the expression for $b$. Let $b'$ be the element represented by the remaining letters of that expression. Then $w'=b'y$ is reduced, since the concatenated word has $\ell(w)-1=\ell(w')$ letters. Applying the induction hypothesis to $w'=xa'=b'y$ gives $z\in\mathcal{LR}(x,y)$ with $\ell(z)\geq\ell(x)+\ell(y)-\ell(w')=\ell(x)+\ell(y)-\ell(w)+1$, which is stronger than required.
\end{proof}

The following is the main technical lemma that we need.
\begin{lem}\label{lem:squeeze-if-not-reduced}
Let $x,y\in W$ and $z\in\mathcal{LR}(x,y)$. If \[\ell(xz^{-1})+\ell(z)+\ell(z^{-1}y)>\ell(xz^{-1}y),\]
then there exists $z'\in \mathcal{LR}(x,y)$ such that $z<_B z'$.
\end{lem}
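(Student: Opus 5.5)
The plan is to locate the first place where the triple product $xz^{-1}\cdot z\cdot z^{-1}y$ fails to be reduced. There we build an auxiliary element $w'$ to which \Cref{lem:overlap-bound} applies, producing an element of $\mathcal{LR}(x,y)$ strictly longer than $z$. \Cref{thm:left-right-max} then upgrades "longer" to "strictly Bruhat-above $z$".

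\textbf{Setting up.} Write $a=xz^{-1}$ and $b=z^{-1}y$. Since $z\leq_L x$ and $z\leq_R y$, both $x=az$ and $y=zb$ are reduced factorizations. Fix a reduced word for $b$ and append its letters to $az$ one at a time. By hypothesis $azb$ is not reduced, so there is a factorization $b=b_1sb_2$ (reduced, $s\in S$) such that $azb_1$ is reduced of length $\ell(a)+\ell(z)+\ell(b_1)$ but $\ell(azb_1s)<\ell(azb_1)$.

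\textbf{Locating the deleted letter.} By the exchange condition applied to the concatenated reduced word for $a\cdot z\cdot b_1$, the element $azb_1s$ is obtained by deleting one letter. That letter cannot lie in the $z\cdot b_1$ portion. Otherwise, cancelling $a$ would write $zb_1s$ with $\ell(z)+\ell(b_1)-1$ letters, contradicting that $zb_1s$ is a prefix of the reduced factorization $y=zb_1sb_2$ and so has length $\ell(z)+\ell(b_1)+1$. So the deletion happens in $a$, giving $azb_1s=a^\circ zb_1$ with $\ell(a^\circ)=\ell(a)-1$. Equivalently,
\[
w'\coloneqq azb_1=a^\circ\cdot zb_1s,
\]
and comparing lengths, $\ell(a^\circ)+\ell(zb_1s)=\ell(a)+\ell(z)+\ell(b_1)=\ell(w')$, so the right-hand factorization is reduced. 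Hence $y'\coloneqq zb_1s\leq_L w'$. Also $x=az\leq_R w'$, since $w'=x\cdot b_1$ is reduced.

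\textbf{Applying the earlier results.} \Cref{lem:overlap-bound} applied to $x\leq_R w'$ and $y'\leq_L w'$ gives $z''\in\mathcal{LR}(x,y')$ with
\[
\ell(z'')\geq \ell(x)+\ell(y')-\ell(w')=\bigl(\ell(a)+\ell(z)\bigr)+\bigl(\ell(z)+\ell(b_1)+1\bigr)-\bigl(\ell(a)+\ell(z)+\ell(b_1)\bigr)=\ell(z)+1.
\]
Since $y'\leq_R y$, we have $\mathcal{LR}(x,y')\subseteq\mathcal{LR}(x,y)$, so $z''\in\mathcal{LR}(x,y)$. Now take $z'\coloneqq\lr(x,y)$ from \Cref{thm:left-right-max}. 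It satisfies $z\leq_B z'$ and $z''\leq_B z'$. Bruhat order is graded by length, so $\ell(z')\geq\ell(z'')>\ell(z)$, and therefore $z<_B z'$.

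The main obstacle is the middle step: verifying that the exchanged letter must come from $a$, and that the resulting factorization $w'=a^\circ\cdot zb_1s$ is genuinely reduced. This length bookkeeping is exactly what \Cref{lem:overlap-bound} needs as input, and it mirrors the argument in the proof of \Cref{lem:overlap-bound} itself.
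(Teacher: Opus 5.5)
Your proposal is correct and follows essentially the same argument as the paper. Both locate the first failure $azb_1s$, use the exchange condition plus the length of the prefix $zb_1s$ of $y$ to force the deleted letter into $a$, apply \Cref{lem:overlap-bound} to the two reduced factorizations $w'=x\cdot b_1=a^\circ\cdot zb_1s$, and then conclude by comparing with $\lr(x,y)$ via \Cref{thm:left-right-max}.
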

\begin{proof}
Write $x=az$ and $y=zb$, with both products reduced. The hypothesis says that $azb$ is not reduced. Fix a reduced expression $b=s_1\cdots s_m$, and let $j$ be the first index for which $azs_1\cdots s_j$ is not reduced. Put $b_0=s_1\cdots s_{j-1}$, $s=s_j$, $w=azb_0=xb_0$, and $y_0=zb_0s$. By construction, $w=xb_0$ is reduced and $\ell(ws)=\ell(w)-1$. Also, $y_0$ is a reduced prefix of $y$, so $y_0\leq_R y$ and $\ell(y_0)=\ell(z)+\ell(b_0)+1$.

Choose reduced expressions for $a$, $z$, and $b_0$, whose concatenation is a reduced expression for $w$. By the exchange condition, deleting one letter from this concatenation gives a reduced expression for $ws$. The deleted letter must lie in the expression for $a$. Otherwise, canceling $a$ from the resulting equality would express $y_0=zb_0s$ by a word of length $\ell(z)+\ell(b_0)-1$, contradicting the preceding length formula. Let $a'$ be the element represented by the remaining letters of the expression for $a$. Then $ws=a'zb_0$ is reduced and $\ell(a')=\ell(a)-1$. Multiplying by $s$ gives $w=a'y_0$, and this factorization is also reduced, since $\ell(a')+\ell(y_0)=\ell(w)$.

We therefore have two reduced factorizations $w=xb_0=a'y_0$. \Cref{lem:overlap-bound} gives an element $v\in\mathcal{LR}(x,y_0)$ satisfying $\ell(v)\geq\ell(x)+\ell(y_0)-\ell(w)=\ell(z)+1$. Since $y_0\leq_R y$, we also have $v\in\mathcal{LR}(x,y)$. Finally, take $z'=\lr(x,y)$. By \Cref{thm:left-right-max}, both $z\leq_B z'$ and $v\leq_B z'$. Thus $\ell(z')\geq\ell(v)>\ell(z)$, so $z<_B z'$, as required.
\end{proof}

\begin{proof}[Proof of \Cref{thm:surjection}]
Fix $w\in W$ and let $z=\lr(w,u)$, which exists by \Cref{thm:left-right-max}. Write $w=az$ and $u=zb$, with both factorizations reduced. If $azb$ were not reduced, \Cref{lem:squeeze-if-not-reduced} would give $z'\in\mathcal{LR}(w,u)$ with $z<_B z'$, contradicting the maximality of $z$. Therefore $au=azb$ is reduced, so $\ell(au)=\ell(a)+\ell(u)$.

For any $v\in U$, write $u=vc$ reduced. Then $\ell(avc)=\ell(a)+\ell(v)+\ell(c)$, so $av$ is also reduced. Hence $a\in W/U$. Since $z\in U$ and $w=az$, the pair $(a,z)$ maps to $w$, proving surjectivity.
\end{proof}

\section{The non-separable cases}
\label{sec:non-sep}
In this section, we restrict to finite Weyl groups, where separable elements are defined \cite{separable-1}. Here we have $W/U=[e,w_0u^{-1}]_L$ for $U=[e,u]_R$ \cite{Bjorner-Wachs}. 

Our main goal, \Cref{thm:separable-bijection}, has already been established in type $A$ \cite{separable-2} and type $B$ \cite{separable-typeB}. Type $C$ follows from type $B$: the root systems of types $B_n$ and $C_n$ are dual to each other, and the bijection $\beta\mapsto\beta^\vee=2\beta/\langle\beta,\beta\rangle$ is $W$-equivariant, sends simple roots to simple coroots, and preserves positivity and supports. It therefore identifies inversion sets, restrictions to standard parabolic subsystems, and pivots, so the separable elements of $W(B_n)$ and $W(C_n)$ coincide. The important remaining cases are types $D$ and $E$, as the remaining exceptional types are easy to deal with computationally. Our discussion of types $D$ and $E$ below is uniform and graph-theoretic.

\subsection{Equivalent conditions for splitting}
The following result will be used in the proof of \Cref{cor:separable-iff-LR-e} to link separable elements to splittings and the operation $\lr$.

\begin{thm}\label{thm:bijection-iff-LRempty}
Let $xy=w_0$ and $U=[e,y]_R$, so that $W/U=[e,x]_L$. The multiplication map $W/U\times U\rightarrow W$ is a bijection if and only if $\lr(x,y)=e$.
\end{thm}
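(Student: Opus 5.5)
By \Cref{thm:surjection}, the multiplication map $W/U\times U\to W$ is always surjective. Since $W$ is finite, the map is a bijection exactly when $|[e,x]_L|\cdot|[e,y]_R|=|W|$, and equivalently exactly when it is injective. So the plan is to show that injectivity holds if and only if $\mathcal{LR}(x,y)=\{e\}$. By \Cref{thm:left-right-max}, $\mathcal{LR}(x,y)=\{e\}$ is the same as $\lr(x,y)=e$, because every element of $\mathcal{LR}(x,y)$ lies Bruhat-below $\lr(x,y)$.

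\emph{If $\lr(x,y)\neq e$, the map is not injective.} Let $z=\lr(x,y)\neq e$ and choose $s\in S$ with $s\leq_R z$. Then $s\leq_R z\leq_R y$, so $s\in U$. Also $s\leq_B z$ and $z\leq_L x$, so $s\in\Supp(x)$. It is cleaner to work with $z$ itself. Write $x=az$ with the product reduced. Then $a\leq_L x$, so $a\in W/U$, and also $x\in W/U$ and $e,z\in U$. The two pairs $(x,e)$ and $(a,z)$ are distinct, since $z\neq e$, and both multiply to $x$. So the map is not injective. This direction uses only the description $W/U=[e,x]_L$ and $z\in U$.

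\emph{If the map is not injective, then $\lr(x,y)\neq e$.} Suppose $au=a'u'$ with $(a,u)\neq(a',u')$, where $a,a'\leq_L x$ and $u,u'\leq_R y$. Every product in $W/U\times U$ is length-additive, and I will exploit this. Consider the element $w=au$. It has $w\leq_R xy$? That is not immediate, so I take a counting route instead. Because the map is surjective, non-injectivity is equivalent to the strict inequality $|[e,x]_L|\,|[e,y]_R|>|W|$. So it suffices to prove the contrapositive: if $\mathcal{LR}(x,y)=\{e\}$, then the map is injective.

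Assume $\mathcal{LR}(x,y)=\{e\}$ and $au=a'u'$ as above. Write $g=a'^{-1}a=u'u^{-1}$, and induct on $\ell(a)+\ell(a')$. The key step, which I expect to be the main obstacle, is to produce a nontrivial common element of $[e,x]_L$ and $[e,y]_R$ from such a collision. The first thing I would try is this. Let $w=au=a'u'$. Since $a\leq_R w$ and $u'\leq_L w$, \Cref{lem:overlap-bound} gives some $z\in\mathcal{LR}(a,u')$ with $\ell(z)\geq\ell(a)+\ell(u')-\ell(w)=\ell(u')-\ell(u)$. By symmetry we may assume $\ell(u')>\ell(u)$: if $\ell(u')=\ell(u)$ then $\ell(a)=\ell(a')$, and I would handle that case separately by the same lemma applied after peeling off a common prefix or suffix. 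Under this assumption $z\neq e$, and $z\leq_L a\leq_L x$ and $z\leq_R u'\leq_R y$. This contradicts $\mathcal{LR}(x,y)=\{e\}$.

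For the equal-length case, let $s$ be a right descent of $u'$. If $s$ is also a right descent of $u$, cancel it and induct. Otherwise, use the exchange condition on the reduced word for $au$, as in the proof of \Cref{lem:overlap-bound}. The deleted letter must lie in $a$, which yields a new collision with unequal lengths, and then the previous argument applies.
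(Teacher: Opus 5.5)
\textbf{Easy direction: one step is invalid.} A reduced factorization $x=az$ gives $a\le_R x$ ($a$ is a prefix of $x$), not $a\le_L x$. The set $W/U=[e,x]_L$ is closed under taking suffixes, not prefixes. For instance, in type $A_2$ we have $x=s_1s_2=az$ with $a=s_1$ and $z=s_2$, but $s_1\not\le_L s_1s_2$.

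For the particular element $z=\lr(x,y)$, the conclusion $xz^{-1}\in W/U$ does hold. This is exactly what the proof of \Cref{thm:surjection} establishes via \Cref{lem:squeeze-if-not-reduced} (take $w=x$ and $u=y$ there), but that is a nontrivial input you did not invoke. The clean fix, which is what the paper does, is to use the pairs $(z,e)$ and $(e,z)$. You already observed $z\le_L x$ and $z\le_R y$, so $z\in(W/U)\cap U$, and both pairs multiply to $z$.

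\textbf{Hard direction: correct, but a different route.} The difference is in how you obtain a positive length gap.
\begin{itemize}
\item The paper takes the collision $w=ab=a'b'$ and notes $b,b'\in\mathcal{LR}(w,y)$. It then replaces the longer factor by $r=\lr(w,y)$. Since $b\ne b'$ both lie Bruhat-below $r$, one of them, say $b$, is strictly shorter than $r$. \Cref{lem:overlap-bound} applied to $a\le_R w$ and $r\le_L w$ then yields a nonidentity element of $\mathcal{LR}(a,r)\subseteq\mathcal{LR}(x,y)$.
\item You instead apply \Cref{lem:overlap-bound} directly to $a$ and $u'$, which works when $\ell(u')>\ell(u)$. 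In the equal-length case you peel off common right descents. At the first $s\in D_R(u')\setminus D_R(u)$, the exchange condition gives $ws=\tilde a u=a'(u's)$, both reduced. \Cref{lem:overlap-bound} applied to $a'\le_R ws$ and $u\le_L ws$ then gives an element of length at least $1$ in $\mathcal{LR}(a',u)\subseteq\mathcal{LR}(x,y)$. This argument is sound.
\end{itemize}

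Two small corrections to your write-up. First, $\tilde a$ need not lie in $W/U$, so this is not literally a new collision in $W/U\times U$; this is harmless, since only $a'$ and $u$ are used. Second, the induction should be on $\ell(u)=\ell(u')$, not on $\ell(a)+\ell(a')$, which the cancellation step does not decrease.

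\textbf{What each approach buys.} Your route avoids a second appeal to \Cref{thm:left-right-max} inside the collision argument. You still need it once, to pass between $\mathcal{LR}(x,y)=\{e\}$ and $\lr(x,y)=e$. The cost is a case split and a small exchange-condition argument, both of which the paper's choice of $r$ makes unnecessary.
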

\begin{proof}
Write $z=\lr(x,y)$. Since $z\in[e,x]_L\cap[e,y]_R=(W/U)\cap U$, the element $z=e\cdot z=z\cdot e$ is the image of both $(e,z)$ and $(z,e)$; so if $W/U\times U\rightarrow W$ is a bijection, then $z=e$. For the more interesting direction, suppose that $W/U\times U\rightarrow W$ is not a bijection. By \Cref{thm:surjection}, this map is surjective, so there is some $w$ and distinct pairs $(a,b),(a',b')\in W/U\times U$ such that $w=ab=a'b'$. Both factorizations are reduced, so $b,b'\in\mathcal{LR}(w,y)$.

Let $r=\lr(w,y)$. By \Cref{thm:left-right-max}, both $b\leq_B r$ and $b'\leq_B r$. Since $b\neq b'$ (as $b=b'$ would force $a=wb^{-1}=a'$), at least one inequality is strict; without loss of generality, $b<_B r$. Now $a\leq_R w$ and $r\leq_L w$, so \Cref{lem:overlap-bound} gives $v\in\mathcal{LR}(a,r)$ with $\ell(v)\geq\ell(a)+\ell(r)-\ell(w)=\ell(r)-\ell(b)>0$. Moreover, $v\leq_L a\leq_L x$ and $v\leq_R r\leq_R y$, so $v\in\mathcal{LR}(x,y)$. Therefore $v\leq_B z$ and $z\neq e$.
\end{proof}

\begin{lem}\label{lem:restriction-w0}
Let $xy=w_0$ and $K\subseteq S$. Then $x_K\cdot{}_K y=w_0(K)$.
\end{lem}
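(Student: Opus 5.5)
We prove that $x_K\cdot{}_Ky=w_0(K)$ using inversion sets. Since $xy=w_0$ is a reduced factorization, the right inversion set $I_R(x)$ and the left inversion set $I_L(y)$ partition $\Phi^+$. Indeed, $I_R(w_0)=I_L(w_0)=\Phi^+$, and for a reduced product $w=xy$ we have $I_L(w)=I_L(x)\sqcup x\,I_L(y)$. Applying this to $w_0=xy$, the set $x\,I_L(y)$ must be exactly $\Phi^+\setminus I_L(x)$.

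To avoid tracking how $x$ moves roots, we rephrase via $y=x^{-1}w_0$. Then $y^{-1}=w_0x$, and
\[
I_L(y)=I_R(y^{-1})=\{\beta>0\mid w_0x\beta<0\}=\{\beta>0\mid x\beta>0\}=\Phi^+\setminus I_R(x).
\]
So $I_L(y)$ is the complement of $I_R(x)$ in $\Phi^+$. In a general Coxeter group one can argue the same way with $T_L$ and $T_R$, but the paper works with Weyl groups here.

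Now restrict to the parabolic subsystem $\Phi(K)$. From the preliminaries, $I_R(x_K)=I_R(x)\cap\Phi(K)^+$. Symmetrically, for the left parabolic decomposition $y={}_Ky\cdot{}^Ky$, we get $I_L({}_Ky)=I_L(y)\cap\Phi(K)^+$. This follows by applying the right-hand statement to $y^{-1}=({}^Ky)^{-1}({}_Ky)^{-1}$, which is the right parabolic decomposition of $y^{-1}$, so that $(y^{-1})_K=({}_Ky)^{-1}$. Combining with the first step, $I_L({}_Ky)$ is the complement of $I_R(x_K)$ inside $\Phi(K)^+$.

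We then run the first computation backwards inside $W(K)$, using its longest element $w_0(K)$. Set $v=x_K^{-1}w_0(K)$. The same calculation shows $I_L(v)=\Phi(K)^+\setminus I_R(x_K)$. Elements are determined by their (left) inversion sets, so $v={}_Ky$. Hence $x_K\cdot{}_Ky=x_Kv=w_0(K)$, and this product is reduced by length count.

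The only real bookkeeping is checking that the left parabolic component corresponds to intersecting the left inversion set with $\Phi(K)$. This comes down to the inversion and conjugation conventions $I_L(w)=I_R(w^{-1})$, and I expect it to be the main place where care is needed. The rest is formal.
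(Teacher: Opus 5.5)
Your proof is correct, but it takes a different route from the paper. The paper never uses roots. It starts from the reduced factorization $w_0=x^K\cdot x_K\cdot{}_Ky\cdot{}^Ky$. For each $s\in K$ it computes $\ell(sx_Ky)$ in two ways: once as $\ell(w_0)-\ell(x^Ks)=\ell(x_Ky)-1$, and once via the left parabolic decomposition $(sx_K\cdot{}_Ky)({}^Ky)$. Comparing the two shows that every $s\in K$ is a left descent of $x_K\cdot{}_Ky$, which forces it to equal $w_0(K)$.

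You instead argue through inversion sets. You show that $I_R(x)$ and $I_L(y)$ are complementary in $\Phi^+$, and that taking parabolic components corresponds to intersecting inversion sets with $\Phi(K)^+$; your check that $(y^{-1})_K=({}_Ky)^{-1}$ correctly handles the left-handed case. You then conclude because ${}_Ky$ and $x_K^{-1}w_0(K)$ have the same inversion set.

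What each approach buys:
\begin{itemize}
\item Yours makes the lemma transparent: it says that complementation in $\Phi^+$ commutes with intersection with $\Phi(K)^+$. It also reuses the restriction facts recalled in the preliminaries.
\item The paper's is purely combinatorial. It needs only the length properties of $w_0$ and of parabolic decompositions, so it applies verbatim in any finite Coxeter group with no root system.
\end{itemize}

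One step your ``same calculation'' passes over quickly: to get $I_L(x_K^{-1}w_0(K))=\Phi(K)^+\setminus I_R(x_K)$, you need that $w_0(K)$ sends $\Phi(K)^+$ to $\Phi(K)^-$. You also need that elements of $W(K)$ permute $\Phi^+\setminus\Phi(K)^+$, so that no roots outside $\Phi(K)$ appear. Equivalently, treat $\Phi(K)$ as a root system with Weyl group $W(K)$. Both facts are standard.
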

\begin{proof}
The factorization $w_0=x^K\cdot x_K\cdot{}_K y\cdot{}^K y$ is reduced and, in particular, $x_K\cdot{}_K y$ is a reduced product in $W(K)$. For every $s\in K$, $\ell(x^Ks)=\ell(x^K)+1$. Then $\ell(sx_Ky)=\ell(w_0)-\ell(x^Ks)=\ell(x_Ky)-1$. On the other hand, the left parabolic decomposition of $sx_K y$ with respect to $K$ is $(sx_K\cdot{}_Ky)({}^K y)$, so $\ell(sx_Ky)=\ell(sx_K\cdot {}_K y)+\ell({}^K y)$. Comparing with $\ell(x_Ky)=\ell(x_K\cdot{}_K y)+\ell({}^K y)$, we obtain $\ell(sx_K\cdot{}_K y)=\ell(x_K\cdot{}_K y)-1$. Thus every $s\in K$ is a left descent of $x_K\cdot{}_K y$, forcing $x_K\cdot{}_K y=w_0(K)$.
\end{proof}

\subsection{Simply-laced types}\label{sub:simply-laced}

In a finite simply-laced Weyl group, we can choose the inner product on the ambient vector space so that $\langle\beta,\beta\rangle=2$ for $\beta\in\Phi^+$, and $\langle\beta,\gamma\rangle\in\{0,1,-1\}$ for $\beta\neq\gamma\in\Phi^+$. Moreover, for $\beta,\gamma\in\Phi^+$, if $\gamma-\beta\in\Phi^+$, then $t_{\beta}\gamma=\gamma-\beta$, and if $\gamma+\beta\in\Phi^+$, then $t_{\beta}\gamma=t_{\gamma}\beta=\gamma+\beta$.

\begin{lem}\label{lem:type-DE-main}
Let $(W,S)$ be a Coxeter system of type $D$ or $E$, and let $xy=w_0$, where $x$ is not separable (or equivalently, $y$ is not separable) but $x_J$ is separable (or equivalently, ${}_Jy$ is separable, by \Cref{lem:restriction-w0}) for every $J\subsetneq S$. Then there exists a \emph{witness} $z\in\mathcal{LR}(x,y)\setminus\{e\}$.
\end{lem}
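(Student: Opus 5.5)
The plan is to translate membership in $\mathcal{LR}(x,y)$ into a condition on inversion sets and then build $z$ from the failure of every pivot. Put $A\coloneqq I_R(x)$. Since $y=x^{-1}w_0$, we have $I_L(y)=I_R(w_0x)=\Phi^+\setminus A$. Hence $z\in\mathcal{LR}(x,y)$ if and only if
\[
I_R(z)\subseteq A\quad\text{and}\quad I_L(z)=-z\,I_R(z)\subseteq\Phi^+\setminus A .
\]
In particular $I_R(z)\cap I_L(z)=\emptyset$, so $z$ can never be an involution, and a witness has length at least $2$. The whole proof consists of producing a non-involutive $z\neq e$ whose right inversions lie in the biconvex set $A$ and whose left inversions lie in its complement.

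\textbf{Step 1: no pivots.} Since $\Phi$ is irreducible and every proper parabolic restriction $x_J$ is separable, the only way $x$ can fail to be separable is that no simple root is a pivot. So for every $s\in S$ the upper set $\{\beta\in\Phi^+\mid \alpha_s\le\beta\}$ meets both $A$ and $\Phi^+\setminus A$. By \Cref{lem:restriction-w0}, the same holds with the roles of $x$ and $y$ exchanged inside each proper parabolic.

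\textbf{Step 2: local pattern and candidate $z$.} I will exploit the minimality by choosing a leaf $s$ of the Dynkin diagram, with $J=S\setminus\{s\}$. The restriction $x_J$ is separable, so it has a pivot $t$, and I iterate this down the pivot structure of $x_J$. At the same time, the no-pivot condition at $s$ supplies roots $\beta\in A$ and $\gamma\notin A$ that both have $\alpha_s$ in their support. Using the simply-laced identities $t_\beta\gamma=\gamma\pm\beta$ and biconvexity of $A$, I expect to find a rank-$3$ subsystem (of type $A_3$, or $A_1^3$ sitting inside a $D_4$ around the branch node) on which $A$ restricts to the inversion set of a $3412$- or $2413$-type pattern. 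In that rank-$3$ subsystem there is an explicit element $z_0$ of length $2$ or $3$ satisfying $I_R(z_0)\subseteq A$ and $I_L(z_0)\cap A=\emptyset$. I will then take $z$ to be the honest group element obtained by writing $z_0$ via the reflections $t_\beta$, $t_\gamma$, and check the two containments directly. The check should be short, because simply-laced reflections act by adding or subtracting roots and so $I_R(z)$ and $I_L(z)$ can be listed explicitly.

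\textbf{Step 3: fallback.} If a direct construction is awkward in some case, I will use the argument in the proof of \Cref{thm:bijection-iff-LRempty}. It suffices to exhibit some $w$ with two distinct reduced factorizations $w=ab=a'b'$, where $a,a'\le_L x$ and $b,b'\le_R y$. By \Cref{thm:left-right-max} and \Cref{lem:overlap-bound}, this forces $\lr(x,y)\neq e$. Such a $w$ can often be found inside a small parabolic that contains $s$ and its neighbours, which is where the failed pivot lives.

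\textbf{Main obstacle.} The hard step is Step 2: turning the failure of every pivot into one concrete rank-$3$ configuration, uniformly over $D_n$ and $E_{6,7,8}$. I would first handle the branch node, whose three neighbours give the $D_4$ situation, and then argue along the arms using heights. Any residual cases in $E_6$, $E_7$, $E_8$ could be dispatched by inspecting the few possible restrictions near the branch.
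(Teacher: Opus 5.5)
Your reformulation of $z\in\mathcal{LR}(x,y)$ as $I_R(z)\subseteq A$ and $I_L(z)\cap A=\emptyset$ is correct, and so is Step~1. Looking at pivots of $x_{S\setminus\{l\}}$ for leaves $l$ is also what the paper does. However, Step~2, which you rightly call the heart of the proof, is only an expectation, and the mechanism you propose for it does not work.

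A non-separable pattern in a rank-$3$ root subsystem $\Phi'$ controls only the colors of roots in $\Phi'$. A witness $z_0$ for the restricted pair satisfies $I_R(z_0)\cap\Phi'\subseteq A$ and $I_L(z_0)\cap\Phi'\cap A=\emptyset$. As an element of $W$, though, its inversion sets in general contain roots outside $\Phi'$; already $t_\beta$ for non-simple $\beta$ has length at least $3$. The pattern says nothing about the colors of those roots. Because $\Phi'$ is not parabolic, there is no analogue of the inclusion $\mathcal{LR}(x_J,{}_Jy)\subseteq\mathcal{LR}(x,y)$, so ``the check should be short'' is precisely the unproved claim. A smaller point: the non-separable $A_3$ patterns are $2413$ and $3142$. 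The pattern $3412$ is separable, and every element of $A_1^3$ is separable by (S1)--(S2).

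In fact your minimality hypothesis rules out every local witness. Suppose $z\in\mathcal{LR}(x,y)$ had $\Supp(z)\subseteq J\subsetneq S$. Then $I_R(z)\subseteq I_R(x)\cap\Phi(J)=I_R(x_J)$, and similarly $z\leq_R{}_Jy$, so $z\in\mathcal{LR}(x_J,{}_Jy)$. Here $x_J\cdot{}_Jy=w_0(J)$ and $x_J$ is separable, so \cite[Thm.~3]{separable-2} and the easy direction of \Cref{thm:bijection-iff-LRempty} give $z=e$. So every witness has full support. By the same reasoning (a reduced factorization $w=ab$ forces $a,b\in W(\Supp(w))$), your Step~3 can never succeed inside a proper parabolic around $s$. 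It is in any case just a restatement of non-injectivity, which is what you are trying to prove.

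What is missing is a construction that controls all of $I_R(z)$ and $I_L(z)$ at once. The paper gets it from height minimality:
\begin{enumerate}
\item For red $\alpha_s$, take a blue root $\beta$ of minimum height with $s\in\Supp(\beta)$, and show $[\alpha_s]\beta=1$.
\item Take $w\in W(S\setminus\{s\})$ with $w\alpha_s=\beta$ and $\ell(w)=\operatorname{ht}(\beta)-1$, and set $z=ws$.
\item Minimality plus biconvexity make $I_L(z)$ entirely blue and $I_R(z)$ entirely red except possibly $\beta$. That exception happens only when $\beta=\gamma(s,t)$ is a path root with $\alpha_t$ blue.
\end{enumerate}
Assuming no witness exists, this forces each leaf $l$ and its pivot $p_l$ to have opposite colors, and the leaves not to be monochromatic. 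It then forces $p_a=c$ and $p_c=b$, after which the explicit full-support element $(a_l\cdots a_1)(b_m\cdots b_1)\,o\,(b_1\cdots b_k)(c_1\cdots c_n)$ is a witness.
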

\begin{proof}
For simplicity of notation, we call a positive root in $I_R(x)$ red and a positive root in $I_L(y)$ blue. Note that $I_R(x)\sqcup I_L(y)=\Phi^+$, so the set $\Phi^+$ of all positive roots admits a coloring into red and blue. The goal is to explicitly construct $z$ such that all roots in $I_R(z)$ are red and all roots in $I_L(z)$ are blue.

For a connected subgraph $H$ of the Dynkin diagram $G$, write $\gamma(H)\coloneqq \sum_{s\in H} \alpha_s$, which is indeed a positive root. For $s,s'\in S$, we also write $\gamma(s,s')$ for the \emph{path root} that is the sum of the simple roots $\alpha_u$ over all vertices $u$ in the unique path connecting $s$ and $s'$. These roots, whose coefficients all lie in $\{0,1\}$, are called \emph{small roots} in \cite{separable-1} and are of great importance in establishing the pattern-avoidance criterion for separable elements.

\begin{claim}\label{claim:witness-main}
Let $\alpha_s$ be red and choose a blue root $\beta$ of minimum height among roots containing $s$. Then $[\alpha_s]\beta=1$. Thus, a saturated chain from $\alpha_s$ to $\beta$ in the root poset gives $w\in W(S\setminus\{s\})$ such that $w\alpha_s=\beta$ and $\ell(w)=\operatorname{ht}(\beta)-1$. Set $z=ws$. Either $z\in\mathcal{LR}(x,y)$ is a witness, or $\beta=\gamma(s,t)$ is a path root with endpoints $s$ and $t$, where $\alpha_t$ is blue.
\end{claim}
\begin{proof}
Since $x_{S\setminus\{s\}}$ is separable and $x$ is not, the roots containing $s$ cannot all be red. Thus such a $\beta$ exists. Put $K=S\setminus\{s\}$ and $h=\operatorname{ht}(\beta)$.

Suppose first that $[\alpha_s]\beta=k\geq2$, and choose a root $\beta_0$ of minimum height in the $W(K)$-orbit of $\beta$. Every root in this orbit is positive, since its coefficient at $\alpha_s$ remains $k$. Minimality gives $\langle\beta_0,\alpha_t\rangle\leq0$ for every $t\in K$, and hence $2=\langle\beta_0,\beta_0\rangle\leq k\langle\beta_0,\alpha_s\rangle$. Since $\beta_0\neq\alpha_s$, it follows that $\langle\beta_0,\alpha_s\rangle=1$, so $\beta_0-\alpha_s$ is a positive root. Choose $g\in W(K)$ with $g\beta_0=\beta$. The decomposition $\beta=g\alpha_s+g(\beta_0-\alpha_s)$ expresses $\beta$ as a sum of two positive roots containing $s$: their coefficients at $\alpha_s$ are $1$ and $k-1$, respectively. Both have smaller height than $\beta$, so both are red, contradicting biconvexity. Therefore $[\alpha_s]\beta=1$.

If a positive root $\eta\neq\alpha_s$ has coefficient one at $\alpha_s$, then some $t\in K$ satisfies $\langle\eta,\alpha_t\rangle>0$. Otherwise, $2=\langle\eta,\eta\rangle\leq\langle\eta,\alpha_s\rangle\leq1$, a contradiction. Thus $\eta$ can be lowered to $\eta-\alpha_t$ without changing its coefficient at $\alpha_s$. Repeating gives a saturated lowering chain from $\beta$ to $\alpha_s$. Let $w$ be the product of its reflection labels in lowering order, so that $w\alpha_s=\beta$ and $w\in W(K)$. This word has $h-1$ letters, whose corresponding simple roots sum to $\beta-\alpha_s$. Every word in $K$ taking $\alpha_s$ to $\beta$ has at least $h-1$ letters, so $\ell(w)=h-1$.

We next show that every root in $I_L(z)=I_L(w)\sqcup\{\beta\}$ is blue. Take $\gamma\in I_L(w)$. Then $t_\gamma w<_B w$, while $t_\gamma\beta$ is still a positive root with coefficient one at $\alpha_s$, since $\gamma$ is supported in $K$. Since $t_{\gamma}w$ takes $\alpha_s$ to $t_{\gamma}\beta$, we have $\operatorname{ht}(t_\gamma\beta)\leq\ell(t_\gamma w)+1<h$. Hence $\langle\beta,\gamma\rangle=1$, and $\beta-\gamma$ is a positive root containing $s$ of smaller height than $\beta$. It is therefore red. Since $\beta=(\beta-\gamma)+\gamma$ is blue, biconvexity forces $\gamma$ to be blue as well. Thus $I_L(z)$ consists entirely of blue roots.

We then analyze $I_R(z)=\{\alpha_s\}\sqcup\{s\gamma\mid \gamma\in I_R(w)\}$. For $\gamma\in I_R(w)$, we have $\langle\alpha_s,\gamma\rangle\leq0$, because $\gamma$ is supported in $K$. We cannot have $\langle\alpha_s,\gamma\rangle=0$, since otherwise $wt_\gamma$ would be a shorter element of $W(K)$ taking $\alpha_s$ to $\beta$. Therefore $\langle\alpha_s,\gamma\rangle=-1$, giving $s\gamma=\alpha_s+\gamma$. This is a root containing $s$, so as long as $\operatorname{ht}(\gamma)\leq h-2$, the root $\alpha_s+\gamma$ is red by the minimality of $\beta$. The only scenario in which $\alpha_s+\gamma$ can be blue is that any reduced word of $w$, which has length $h-1$, lowers the height of $\gamma$ one step at a time before a final step that changes its sign. It follows that $\gamma$ is the sum of the simple roots labeling that word, namely $\beta-\alpha_s$. Thus the only possible blue root in $I_R(z)$ is $\beta$ itself.

In summary, unless $\beta-\alpha_s\in I_R(w)$, we have constructed a witness.

It remains to show that this exceptional case forces $\beta$ to be a path root with endpoint $s$. We will prove that if $r\in S$, $\eta\in\Phi^+$, and $v\in W(S\setminus\{r\})$ satisfy \[v\alpha_r=\eta,\quad\ell(v)=\operatorname{ht}(\eta)-1,\quad \eta-\alpha_r\in I_R(v),\]
then $\eta=\gamma(r,t)$ is a path root for some $t\neq r$ with $t\in D_L(v)$. Induct on $\operatorname{ht}(\eta)\geq2$. If $\operatorname{ht}(\eta)=2$, then $\eta=\alpha_r+\alpha_{t}=\gamma(r,t)$ with $t$ adjacent to $r$ in the Dynkin diagram, and $v=t$. For the general case, a reduced word of $v$ acting on $\alpha_r$ must increase its height one step at a time. So $\eta-\alpha_r$ is the sum of the simple roots labeling any reduced expression of $v$, counted with multiplicity. Moreover, as $v\in W(S\setminus\{r\})$ and $\eta-\alpha_r$ is a root, there exists a unique neighbor $r'$ of $r$ in $\Supp(\eta-\alpha_r)$, with coefficient $1$. Also, $\Supp(\eta-\alpha_r)$ lies in a single connected component of $S\setminus\{r\}$. This also shows that any reduced word of $v$ ends with $r'$. Write $v=v'r'$. Since $r'$ has coefficient $1$ in $\eta-\alpha_r$, the letter $r'$ appears exactly once in any reduced word of $v$, so $\Supp(v')\subseteq\Supp(\eta-\alpha_r)\setminus\{r'\}\subseteq S\setminus\{r,r'\}$. In particular, $\Supp(v')$ contains no neighbor of $r$, so $v'\alpha_r=\alpha_r$ and hence $v'\alpha_{r'}=v'(r'\alpha_r-\alpha_r)=v\alpha_r-\alpha_r=\eta-\alpha_r$. Since $v'r'(\eta-\alpha_r)<0$, every step from any reduced word of $v'$ lowers the height by $1$, until the final step changes sign. In particular, this means $r'(\eta-\alpha_r)=\eta-\alpha_r-\alpha_{r'}\in I_R(v')$. The triple $(r',\eta-\alpha_r,v')$ satisfies the induction hypothesis, which means $\eta-\alpha_r=\gamma(r',t)$ for some $t\in D_L(v')$; note that $D_L(v')\subseteq D_L(v)$ since $v'\leq_R v$. As $\Supp(\eta-\alpha_r)$ does not contain $r$, we necessarily have $\eta=\alpha_r+\gamma(r',t)=\gamma(r,t)$ as desired. 

Finally, since $\beta$ is a blue root of minimum height containing $s$, the root $\beta-\alpha_t$ must be red, forcing $\alpha_t$ to be blue by biconvexity. 
\end{proof}

By the \emph{color-reversed version} of \Cref{claim:witness-main}, we mean \Cref{claim:witness-main} applied to the pair $(y^{-1},x^{-1})$ in place of $(x,y)$. This pair also satisfies the hypotheses of \Cref{lem:type-DE-main}: we have $y^{-1}x^{-1}=w_0$, the element $y^{-1}$ is not separable, and $(y^{-1})_J=({}_Jy)^{-1}$ is separable for every $J\subsetneq S$, since separability is preserved by inversion. Since $I_R(y^{-1})=I_L(y)$ and $I_L(x^{-1})=I_R(x)$, passing to this pair swaps red and blue. Moreover, $\mathcal{LR}(y^{-1},x^{-1})=\mathcal{LR}(x,y)^{-1}$, so a witness for one pair inverts to a witness for the other.

Now suppose, for the sake of contradiction, that $\mathcal{LR}(x,y)=\{e\}$. \Cref{claim:witness-main} and its color-reversed version imply that every minimum-height root of the opposite color containing a vertex $u$ is a path root $\gamma(u,v)$, where $v$ has that opposite color.

Let $o$ be the trivalent vertex of the Dynkin diagram $G$. For each leaf $l$, choose a pivot $p_l$ of the separable restriction obtained by deleting $l$. Every positive root containing $p_l$ but avoiding $l$ has the color of $p_l$. A minimum-height root of the opposite color containing $p_l$ must contain $l$. By \Cref{claim:witness-main} and its color-reversed version, this root is a path root with endpoint $p_l$. Its other endpoint must be $l$, since $l$ is a leaf. Consequently, $p_l$ and $l$ have opposite colors, and $\gamma(p_l,l)$ has the color of $l$.

\begin{claim}\label{claim:leaves-not-monochromatic}
The three leaves of $G$ cannot all have the same color.
\end{claim}
\begin{proof}
Suppose, for the sake of contradiction, that all three leaves have the same color; by symmetry, we may assume that they are all blue. Then their corresponding pivots are red. The three path roots joining a leaf to its corresponding pivot are all blue, so these three paths do not contain other pivots and are thus disjoint. Since we are working in type $D$ or $E$, assume without loss of generality that the leaf $a$ is adjacent to $o$, so that the path from $a$ to $p_a$ contains $o$. Name the other two leaves $b$ and $c$ so that $p_a$ lies on the path from $o$ to $b$, possibly with $p_a=o$. Disjointness forces $p_c$ to lie on the arm from $o$ to $c$. See \Cref{fig:Claim2-cartoon}.
\begin{figure}[h!]
\centering
\begin{tikzpicture}[scale=0.8]
\node at (0,0) {$\bullet$};
\node[below,text=blue] at (0,0) {$b$};
\node at (1,0) {$\bullet$};
\node at (2,0) {$\bullet$};
\node[below,text=red] at (2,0) {$p_b$};
\node at (3,0) {$\bullet$};
\node[below,text=red] at (3,0) {$p_a$};
\node at (4,0) {$\bullet$};
\node[below] at (4,0) {$o$};
\node at (5,0) {$\bullet$};
\node[below,text=red] at (5,0) {$p_c$};
\node at (6,0) {$\bullet$};
\node[below,text=blue] at (6,0) {$c$};
\node at (4,1) {$\bullet$};
\node[right,text=blue] at (4,1) {$a$};
\draw(0,0)--(6,0);
\draw(4,0)--(4,1);
\end{tikzpicture}
\caption{A visualization for \Cref{claim:leaves-not-monochromatic}.}
\label{fig:Claim2-cartoon}
\end{figure}

Let $H$ be the subtree spanned by $a$, $c$, and $p_a$; it also contains $p_c$. Since $b\notin H$, the restriction of $x$ to $H$ is separable; choose a pivot $u$ of it. As $\gamma(H\setminus\{a\})$ contains $p_a$ and avoids $a$, it is red, and similarly $\gamma(H\setminus\{c\})$ is red as well. The supports of these two roots cover $H$, meaning that $u$ must be red. Choose a minimum-height blue root containing $u$. By \Cref{claim:witness-main}, it is a path root $\gamma(u,v)$ with $v$ blue. This path cannot be contained in $H$, since $u$ is a red pivot there. The other endpoint $v$ must lie outside $H$, on the omitted part of the $b$-arm. The path from $v$ to $u$ passes through $p_a$ and avoids $a$, meaning that this path root is red, a contradiction.
\end{proof}

We now relabel the vertices of the Dynkin diagram $G$. See \Cref{fig:Claim3-cartoon} for visualization. Let the three arms attached to $o$ be $A=\{a_1,\ldots,a_l\}$, $B=\{b_1,\ldots,b_m\}$ and $C=\{c_1,\ldots,c_n\}$, where $a_1,b_1,c_1$ are connected to $o$ and the leaves are $a_l=a$, $b_m=b$, $c_n=c$. For simplicity of notation, also write $a_0=b_0=c_0=o$. We no longer need to use the fact that at least one of $l,m,n$ is $1$. By \Cref{claim:leaves-not-monochromatic}, we may assume without loss of generality that $a,b$ are blue and $c$ is red. Then $p_a,p_b$ are red and $p_c$ is blue; $\gamma(a,p_a),\gamma(b,p_b)$ are blue and $\gamma(c,p_c)$ is red. The path root $\gamma(p_a,p_c)$ would have both colors unless $p_a=c$ or $p_c=a$. Similarly, $p_b=c$ or $p_c=b$. Since $\gamma(a,p_a)$ is blue and $\gamma(a,p_b)$ is red, $p_a\neq p_b$. We may assume without loss of generality that $p_a=c$ and $p_c=b$. 

For every $u$ on the path between $a$ and $c=p_a$, the path root $\gamma(a,u)$ is blue. This is true if $u=a$ or $u=c$. For $u$ strictly between $a$ and $c$, $\gamma(a,c)-\gamma(a,u)$ is red as $c=p_a$ is a red pivot of $G\setminus\{a\}$, and $\gamma(a,c)$ is blue, which means $\gamma(a,u)$ is blue by biconvexity. Applying the color-reversed version of \Cref{claim:witness-main} to $a$, we see that a minimum-height red root containing $a$ must be $\gamma(a,b_k)$ for some $1\leq k<m$. Thus $b_k$ is red.

\begin{figure}[h!]
\centering
\begin{tikzpicture}[scale=0.7]
\node at (0,0) {$\bullet$};
\node at (1,0) {$\bullet$};
\node at (2,0) {$\bullet$};
\node at (3,0) {$\bullet$};
\node at (4,0) {$\bullet$};
\node at (5,0) {$\bullet$};
\node at (6,0) {$\bullet$};
\node at (7,0) {$\bullet$};
\node at (8,0) {$\bullet$};
\node at (9,0) {$\bullet$};
\node at (5,2) {$\bullet$};
\node at (5,1) {$\bullet$};
\node[below,text=blue] at (0,0) {${b=p_c}$};
\node[below,text=red] at (2,0) {$b_k$};
\node[right,text=blue] at (5,2) {$a$};
\node[below,text=red] at (9,0) {$c=p_a$};
\draw(0,0)--(9,0);
\draw(5,0)--(5,2);
\end{tikzpicture}
\caption{A visualization for \Cref{claim:typeDE-final-witness}.}
\label{fig:Claim3-cartoon}
\end{figure}

\begin{claim}\label{claim:typeDE-final-witness}
Under the assumptions above, identifying vertices with their corresponding simple reflections, the element
\[
z=(a_l\cdots a_1)(b_m\cdots b_1)\,o\,(b_1\cdots b_k)(c_1\cdots c_n)
\]
is a nonidentity witness: every root in $I_R(z)$ is red and every root in $I_L(z)$ is blue.
\end{claim}
\begin{proof}
Put $\beta=\gamma(a,b)$ and $\delta=\gamma(c,b_k)$. For $1\leq i\leq l$ and $0\leq j\leq n$, write $A_i=\alpha_{a_1}+\cdots+\alpha_{a_i}$ and $C_j=\alpha_{c_1}+\cdots+\alpha_{c_j}$, with $C_0=0$.

Recall that for a reduced word $w=s_{i_1}s_{i_2}\cdots s_{i_{\ell}}$, $I_L(w)=\{\alpha_{i_1},s_{i_1}\alpha_{i_2},s_{i_1}s_{i_2}\alpha_{i_3},\ldots\}$. Thus, we calculate explicitly that
\begin{align*}
I_L(z)={}&
\{\gamma(a,a_i)\mid 1\leq i\leq l\}
\sqcup\{\gamma(b,b_j)\mid 1\leq j\leq m\}\\
&\sqcup\{\gamma(a,b_j)\mid 0\leq j<k\}
\sqcup\{\beta+C_i\mid 0\leq i\leq n\},\\
I_R(z)={}&
\{\gamma(c,u)\mid u\in B\cup C\cup\{o\}\}\\
&\sqcup\{\gamma(b_j,b_k)\mid 1\leq j\leq k\}
\sqcup\{\delta+A_i\mid 1\leq i\leq l\}.
\end{align*}

We check the colors of the left inversions first. Since $\gamma(a,b_k)$ is a minimum-height red root containing $a$, every $\gamma(a,a_i)$ and every $\gamma(a,b_j)$ with $j<k$ is blue. Each $\gamma(b,b_j)$, as well as each $\beta+C_i$ with $i<n$, contains the blue pivot $p_c=b$ and avoids $c$, so these roots are blue as well. Finally,
$\beta+C_n=\gamma(a,c)+\gamma(b_1,b)$ is blue by biconvexity: $\gamma(a,c)$ is blue by the preceding argument, and $\gamma(b_1,b)$ contains $p_c=b$ and avoids $c$. Hence every root in $I_L(z)$ is blue.

For the right inversions, each $\gamma(c,u)$ with $u\in B\cup C\cup\{o\}$ contains the red pivot $p_a=c$ and avoids $a$, so it is red. For $1\leq j\leq k$, the decomposition
$\gamma(a,b_k)=\gamma(a,b_{j-1})+\gamma(b_j,b_k)$ has a red left-hand side and a blue first summand. Biconvexity therefore forces $\gamma(b_j,b_k)$ to be red. Similarly, $\delta+A_i$ contains $p_a=c$ and avoids $a$ whenever $i<l$, so it is red. The final root satisfies
$\delta+A_l=\gamma(a,b_k)+C_n$, a sum of two red roots, and is therefore red.
\end{proof}

The preceding claims produce an element $z\in\mathcal{LR}(x,y)\setminus\{e\}$, contradicting our assumption that $\mathcal{LR}(x,y)=\{e\}$.
\end{proof}

\subsection{Finishing the proof of \texorpdfstring{\Cref{thm:separable-bijection}}{Theorem 1.4}}
\begin{proof}[Proof of \Cref{thm:separable-bijection}]
If $y$ is separable, then the multiplication map $W/U\times U\rightarrow W$ is a length-additive bijection, where $U=[e,y]_R$ and $W/U=[e,x]_L$ for $xy=w_0$, by \cite[Thm.~3]{separable-2}. Now assume that $xy=w_0$ and that neither $x$ nor $y$ is separable. It suffices to find two distinct pairs in $W/U\times U$ with the same product; for instance, a witness $z\in\mathcal{LR}(x,y)\setminus\{e\}$ gives the two pairs $(e,z)$ and $(z,e)$.

Choose $J\subseteq S$ minimal such that $x_J$ is not separable. For every $J'\subsetneq J$, the element $(x_J)_{J'}=x_{J'}$ is separable. By (S1) and (S2), $J$ is connected and has at least two elements. By \Cref{lem:restriction-w0}, $x_J\cdot{}_Jy=w_0(J)$, so $W(J)/[e,{}_Jy]_R=[e,x_J]_L$. Moreover, $[e,x_J]_L\subseteq[e,x]_L=W/U$ and $[e,{}_Jy]_R\subseteq[e,y]_R=U$. Hence two distinct pairs in $[e,x_J]_L\times[e,{}_Jy]_R$ with the same product, and in particular a witness in $\mathcal{LR}(x_J,{}_Jy)$, give two distinct pairs in $W/U\times U$ with the same product. Replacing $(W,x,y)$ by $(W(J),x_J,{}_Jy)$, we may therefore assume that $W$ is irreducible and that for any $J\subsetneq S$, $x_J$ and ${}_Jy$ are separable.

If $(W,S)$ is of type $D$ or $E$, \Cref{lem:type-DE-main} produces such a witness. If $(W,S)$ is of exceptional type $F_4$ or $G_2$, a quick computational check also generates a witness for each non-separable pair. If $(W,S)$ is of type $A$, we previously established the theorem in \cite[Thm.~4]{separable-2}, while type $B$ was proved by Liu and Yu \cite[Thm.~1.2]{separable-typeB}; type $C$ follows from type $B$, since the two types have the same Weyl group and the same separable elements (see the beginning of this section). In these cases the cited results show that the multiplication map is not a bijection, hence not injective by \Cref{thm:surjection}, which suffices by the previous paragraph. The arguments in previous papers for types $A$ and $B$ are significantly different from our analysis here. In fact, continuing our line of thought yields much shorter proofs in types $A$ and $B$ by describing a witness explicitly.

For type $A$, suppose first that $x$ contains the pattern $2413$. Then a witness is given by $z=(s_1s_2\cdots s_{n-1-x(1)})(s_{n-1}s_{n-2}\cdots s_{n+1-x(n)})$. And if $x$ contains $3142$ instead, then $y^{-1}=w_0x$ contains $2413$. Since $y^{-1}x^{-1}=w_0$ and $\mathcal{LR}(y^{-1},x^{-1})=\mathcal{LR}(x,y)^{-1}$, applying the construction to the pair $(y^{-1},x^{-1})$ and inverting the resulting witness gives a witness for $(x,y)$.

For type $B$, adopt the convention of signed permutations (see \cite{separable-typeB} for details) that $S=\{s_0,s_1,\ldots,s_{n-1}\}$, where $s_0$ corresponds to the simple root $\mathbf{e}_1$ and $s_i$ corresponds to the simple root $\mathbf{e}_{i+1}-\mathbf{e}_i$ for $i\in[n-1]$. The Weyl group $W$ consists of the permutations $w$ of $\{-n,\ldots,-1,1,\ldots,n\}$ such that $w(-i)=-w(i)$. Normalize so that $x(n)>0$. If $x(n)<0$, then $y^{-1}=w_0x$ satisfies $y^{-1}(n)>0$, so apply the construction to the pair $(y^{-1},x^{-1})$ and invert the resulting witness, as in type $A$.

If $x(1)<x(n)$, let $h=\min\{i<n\mid x(i)<-x(n)\}$ and $k=\max\{i<n\mid x(i)<0\}$. Then
\[
z=(s_ks_{k+1}\cdots s_{n-1})(s_{k-1}s_{k-2}\cdots s_0)(s_1s_2\cdots s_{h-1})
\]
is a witness. If $x(1)>x(n)$, let $h=\max\{i<n\mid x(i)>x(n)\}$ and $k=\max\{i<n\mid x(i)>0\}$; then we have a witness
\[
z=(s_{n-1}s_{n-2}\cdots s_{k+1})(s_0s_1\cdots s_k)(s_{k-1}s_{k-2}\cdots s_h).
\]
\end{proof}

\begin{proof}[Proof of \Cref{cor:separable-iff-LR-e}]
Let $W$ be a finite Weyl group and $u \in W$. Then for $U=[e,u]_R$ we have $W/U=[e,w_0u^{-1}]_L$ by \cite{Bjorner-Wachs}. By \Cref{thm:separable-bijection} we have that $(W/U,U)$ is a splitting of $W$ if and only if $u$ is separable. On the other hand, by \Cref{thm:bijection-iff-LRempty} this holds if and only if $\lr(w_0u^{-1},u)=e$.
\end{proof}

\subsection*{Acknowledgments}
C.G. was partially supported by National Science Foundation award no. DMS-2452032 and by a travel grant from the Simons Foundation. Y.G. was partially supported by NSFC Grant no. 12471309. We are grateful to Grant Barkley, Christophe Hohlweg, and Oded Yacobi for helpful conversations.

\subsection*{Statement on AI use} The authors long conjectured that \Cref{thm:left-right-max} and \Cref{lem:squeeze-if-not-reduced} were true and were key to proving \Cref{thm:surjection}. ChatGPT produced the proof of \Cref{thm:left-right-max} and was also used for some details of several of the technical lemmas.

\bibliographystyle{halpha-abbrv}
\bibliography{separable3}
\end{document}